\documentclass[12pt]{amsart}

\usepackage{lineno,hyperref}
\usepackage{amsmath,amsfonts,amsthm}
\usepackage{amssymb,latexsym}
\usepackage{cases}
\usepackage[numbers]{natbib} 
\usepackage{booktabs}

\usepackage{cleveref}
\textheight22cm  
\topmargin-4mm  
  
\textwidth16.5cm  
\evensidemargin-3mm  
\oddsidemargin-3mm  

\renewcommand\appendix{\setcounter{secnumdepth}{-2}}

\usepackage{url,cancel}
\usepackage{float}

\usepackage{caption,enumitem}
\captionsetup{font=footnotesize,labelfont=bf}
\captionsetup{textfont=footnotesize}
\captionsetup[table]{position=auto}
\captionsetup{format=plain,labelsep=newline,singlelinecheck=false}

\newtheorem{thm}{Theorem}[section]

\newtheorem{lem}[thm]{Lemma}
\newtheorem{prop}[thm]{Proposition}

\theoremstyle{remark}
\newtheorem{re}[thm]{Remark}
\newtheorem{defn}[thm]{Definition}

\newcommand{\ord}{\mbox{ord}}
\newcommand{\lcm}{\mbox{lcm}}
\newcommand{\Z}{{\mathbb Z}}

\newcommand{\R}{{\mathbb R}}

\newcommand{\N}{\mathbb{N}}

\makeatletter
\@addtoreset{equation}{section}
\makeatother
\numberwithin{equation}{section}


\begin{document}

\author{Zilong He}
\address{Department of Mathematics, University of Hong Kong, Pokfulam, Hong Kong}
\email{zilonghe@hku.hk}
\author{Ben Kane}
\address{Department of Mathematics, University of Hong Kong, Pokfulam, Hong Kong}
\email{bkane@hku.hk}
\title[Regular ternary polygonal forms]{Regular ternary  polygonal forms}
\thanks{The research of the second author is supported by grant project numbers 17316416, 17301317, and 17303618 of the Research Grants Council of Hong Kong SAR.  Part of the research was also conducted while the second author was supported by grant project number 17302515 of the Research Grants Council of Hong Kong SAR}
\subjclass[2010]{11D09,11E12,11E20}
\date{\today}
\keywords{Polygonal numbers, regular quadratic polynomials, Diophantine equations, inequalities for primes}
\begin{abstract}
Inspired by Dickson's classification of regular diagonal ternary quadratic forms, we prove that there are no primitive regular ternary $ m $-gonal forms when $ m $ is sufficiently large. In order to do so, we construct sequences of primes that are inert in a certain quadratic field and show that they satisfy a certain inequality bounding the next such prime by a product of the previous primes, a question of independent interest.
\end{abstract}
\maketitle

\section{Introduction}

Representations of integers as sums of polygonal numbers have a long history going back to Fermat. Fermat famously conjectured that every integer may be written as the sum of $3$ triangular numbers, $4$ squares, $5$ pentagonal numbers and in general $m$ $m$-gonal numbers; Lagrange proved the four squares theorem, Gauss and Legendre independently showed the triangular number theorem, and Cauchy finally proved the general case. For $m\geq 3$ and $x\in\Z$, we denote by $p_{m}(x):=((m-2)x^2-(m-4)x)/2$
the \begin{it}$x$-th generalized $m$-gonal number\end{it} and for a sequence $a_1,\dots,a_n\in \N$ we define the \begin{it}$m$-gonal form\end{it} (or \begin{it}polygonal form\end{it} ) 
\begin{align*}
\triangle_{m,a}(x_{1}, \cdots , x_{n}):=\sum_{i=1}^{n}a_{i}p_{m}(x_{i}).
\end{align*}
Fermat's polygonal number conjecture may then be restated by saying that for $a=(1,\dots,1)$ of length $m$, the $ m $-gonal form $ \triangle_{m,a} $  is \begin{it}universal\end{it}, i.e., for every positive integer $\ell$, the Diophantine equation $\triangle_{m,a}(x)=\ell $
is solvable. More generally, let $ F $ be a field and $ R\subset F $ a ring. For an $n$-ary quadratic polynomial $ f(x_{1},\cdots,x_{n})\in F[x_{1},\cdots, x_{n}] $ and $\ell\in F$, we say that $\ell$ is \begin{it}represented by $f$\end{it} if the equation $ f(x)=\ell$ is solvable with $x\in R^n$, which we denote by $ \ell\mathop{\rightarrow}\limits_{R} f $, and not represented otherwise, which we denote by $ \ell\mathop{\not\rightarrow}\limits_{R} f $. It is well known that a natural number can be represented by the sum of three squares if and only if it is not of the form $  4^{t}(8\ell+7) $, with the restriction coming from the fact that one cannot even solve the congruence equation $x_1^2+x_2^2+x_3^2\equiv 7\pmod{8}$.
In investigating representations of integers by quadratic polynomials it is therefore natural to first exclude integers which cannot possibly be represented modulo a fixed integer and then separately investigate the ``sporadic'' integers for which congruence equations are always solvable but the equation over the integers is not solvable. In order to better encode this information, we let $ \mathbb{Z}_{p} $ be the $ p$-adic integers, with $\mathbb{Z}_{\infty}:=\mathbb{R}$ by convention. We say that $\ell $ is locally (resp. globally) represented by an $ n $-ary rational quadratic polynomial $ f $, if $ \ell \mathop{\rightarrow}\limits_{\Z_{p}}f $ for each prime $ p $ including $ p=\infty $ (resp. if $ \ell\mathop{\rightarrow}\limits_{\mathbb{Z}} f $).
 
A general principle, known as the Minkowski local-global principle, states that one should ``usually'' find a global solution whenever one finds a local solution. The aforementioned example states that the form given by the sums of three squares always obeys the Minkowksi local-global principle. This led to L. E. Dickson \cite{dickson_ternary_1926} asking which other quadratic forms always obey the local to global principle. He dubbed such forms \begin{it}regular\end{it}, starting an investigation and classification of such forms (his definition being equivalent to Jones's definition \cite[Corollary, p.\hskip 0.1cm 124]{jones_regularity_1931}). To more formally define regularity, we adopt the following definition of Chan and Ricci \cite{chan_representation_2015}.
\begin{defn}
	A quadratic polynomial $ f $ is said to be \begin{it}regular\end{it} if it globally represents all rational numbers that are locally represented by $ f $. We also call $ f $ irregular if $ f $ is not regular.
\end{defn}
It was shown by Jagy, Kaplansky, and Schiemann \cite{JKS} that there are at most 913 regular ternary (i.e., $n=3$) quadratic forms (some of these are still only conjectural, although the list has been shown to be correct by Lemke Oliver \cite{LemkeOliver} under the assumption of GRH), up to obvious repeats coming from multiplying a regular form by a fixed constant or by an invertible change of variables (more precisely, an isometry over $\Z$). It is hence natural to wonder how abundant regular $m$-gonal forms are. In order to exclude the obvious repeats mentioned above, we call an $m$-gonal form \begin{it}primitive\end{it} if $ \gcd(a_{1},\cdots,a_{n})=1 $ and its discriminant is defined by $ \prod_{i=1}^{n}a_{i} $. Chan and B. K. Oh \cite{chan_representations_2013} showed that there are only finitely many primitive regular ternary triangular forms ($ m=3 $), a result which was later extended by Chan and Ricci \cite{chan_representation_2015} to finiteness results for ternary quadratic polynomials. In this paper, we improve their results by obtaining a quantitative bound in terms of $m$ on the possible choices of $(a,b,c)$ for which $\triangle_{m,(a,b,c)}$ may be regular, leading to the following theorem.
\begin{thm}\label{thm:noregularmgonalforms}
There exists an absolute constant $C$ such that for $m>C$, there are no primitive regular ternary $ m $-gonal forms $ \triangle_{m,(a,b,c)}$ with $(a,b,c)\in \N^3$. 
\end{thm}
\begin{re}
Due to the bound in Theorem \ref{thm:noregularmgonalforms} and Chan and Ricci's results in \cite{chan_representation_2015}, there are only finitely many tuples $(a,b,c,m)\in\N^4$ with $\gcd(a,b,c)=1$ ($m\geq 3$) for which $\triangle_{m,(a,b,c)}$ is regular. It would be interesting to try to determine this finite set explicitly. There has been recent progress in this direction, as M. Kim and B.-K. Oh \cite{KimOh} have just completely determined all of the regular ternary triangular forms $ \triangle_{3,(a,b,c)} $, determining that there are precisely 49 of them (see \cite[Theorem 4.10 and Table 4]{KimOh} for a full list).
\end{re} 

In the classification of primitive regular ternary quadratic forms $ ax^{2}+by^{2}+cz^{2} $ (namely, $ \triangle_{4,(a,b,c)} $) \cite{dickson_ternary_1926,jones_regular_1939}, to rule out most of the irregular ones, Dickson made use of an inequality involving primes of a certain type \cite[Theorem 5]{dickson_ternary_1926}. To be more explicit, for a given positive integer $ b $, assume that $ p_{i}$'s  are all the odd prime numbers not represented by $ x^{2}+by^{2} $ in ascending order and choose $i_0$ such that 
\begin{align*}
p_{1}<p_{2}<\cdots<p_{i_{0}}<b<p_{i_{0}+1}< \cdots.
\end{align*}
He proved the inequality $ p_{i+1}<p_{1}p_{2}\cdots p_{i} $ holds for $ i\ge i_{0} $ \cite[footnote, p.\hskip 0.1cm 336]{dickson_ternary_1926}. To give a rough illustration how such an inequality applies to the regularity of such forms, suppose that $\triangle_{4,(1,b,c)}$ is regular and $p_{i_{0}+1}$ is locally represented. Then it must be the case that $c\leq p_{i_{0}+1}$ (since otherwise $x^2+by^2+cz^2=p_{i_0+1}$ cannot be solvable), and the inequality yields an inequality on $c$ depending on $b$ (as the $p_1,\dots,p_{i_{0}}$ are all primes smaller than $b$). Inspired by this, we deal with primitive ternary $ m $-gonal forms by virtue of analogous technical inequalities involving inert primes (see \eqref{eq34}), thereby showing Theorem \ref{thm:noregularmgonalforms}.

The paper is organized as follows. In Section \ref{sec:Earnest_trick}, we establish Lemma \ref{lem:newprimeanalytic} by Earnest's trick, which will be used to deduce the inequality \eqref{eq34} involving inert primes with additional restrictions analogous to Dickson's one. In Section \ref{sec:local_representation}, we  introduce the Watson's transformation and give the conditions on local representation by a (ternary) polygonal form. In Section \ref{sec:bound_abc}, we prove Theorem \ref{thm:noregularmgonalforms} by bounding the discriminant $ abc $.

\section{Earnest's trick}\label{sec:Earnest_trick}
Let $ k_{1}, k_{2},\ldots, k_{r} $ be pairwise relatively prime positive integers. Let $\chi_{i} $ be a Dirichlet character modulo $ k_{i} $ and $ \eta_{i}\in\{\pm 1\} $. Define
\begin{equation}\label{eq30}
\mathcal{S}_{\chi,\eta}:=\{n\in\Z: \chi_i(n)=\eta_i\hskip 0.15cm\forall i=1,\dots,r\}.
\end{equation}
For an integer $ M $ relatively prime to $ \Gamma:=\lcm(k_{1},k_{2},\cdots,k_{r}) $ and a nonnegative number $ x $ we furthermore set 
\[
S_x(H):=\#\{ n\in \mathcal{S}_{\chi,\eta}: n\in (x,x+H)\text{ and } \gcd(n,M)=1\}.
\]

Following Earnest's trick \cite[p.\hskip 0.1cm 855--856]{earnest_representation_1994}, we give an explicit bound on $ S_{x}(H) $. In order to state the bound, we require some notation. Let $ U=\{1,2\} $ and $ \mathbf{\alpha}=(\alpha_{1},\cdots,\alpha_{r}) $ be an element of the product set $ U^{r} $. Define  
$\chi_{\alpha}=\prod_{i=1}^{r}(\eta_{i}\chi_{i})^{\alpha_{i}}$. Then $ \chi_{\alpha} $ is clearly a Dirichlet character modulo $ \Gamma $. Characters $ \chi_{1},\ldots, \chi_{r} $ are said to be \begin{it}independent\end{it} if $ \chi_{\alpha} $ is a nonprincipal character for any $ \alpha\neq \beta_{0}$, where $ \beta_{0}=(2,\cdots,2) $. We also let $\omega(n)$ denote the number of distinct prime divisors of $n$ and $\phi$ denote the Euler totient function. 

\begin{lem}\label{lem:SxH_estimation}
	Suppose that $ \chi_{1},\cdots, \chi_{r} $ are independent. Then 
	\begin{align*}
	S_{x}(H)\hskip 0.1cm\ge \hskip 0.1cm H\dfrac{\phi(\Gamma M)}{\Gamma M2^{r}}-2^{\omega(\Gamma M)-r+1}-2^{\omega(M)}\dfrac{2^{r}-1}{2^{r}}\left(\dfrac{1}{3\log 3}\sqrt{\Gamma}\log \Gamma+\dfrac{13}{2}\sqrt{\Gamma}\right).
	\end{align*}
\end{lem}

We need explicit estimates for character sums before showing Lemma \ref{lem:SxH_estimation} and use a version of Polya--Vinogradov inequality proved by Bachman and Rachakonda \cite[Corollary, p.\hskip 0.1cm 66]{bachman_problem_2001}.

\begin{prop}[Bachman - Rachakonda]\label{prop:charactersum}
	Let $ k\in \mathbb{N}$. If $\chi$ is a nonprincipal character of modulus $ k $ and $x, y$ are real numbers with $ x<y $, then
	\begin{align*}
	\left|\sum_{x<n\le y}\chi(n)\right|\hskip 0.1cm\le \hskip 0.1cm  \dfrac{1}{3\log 3}\sqrt{k}\log k+\dfrac{13}{2}\sqrt{k},
	\end{align*}
independent of $x$ and $y$. 
\end{prop}
We modify Proposition \ref{prop:charactersum} slightly so that it is applicable to our situation.
\begin{lem}\label{lem:charactersumgcd}
	Let $ k,M $ be integers with $ \gcd(k,M)=g $. Then for any nonprincipal character $ \chi $ of modulus $ k $, we have
	\begin{align*}
	\bigg|\sum_{\substack{x<n\le y\\ \gcd(n,M)=1}}\chi(n)\bigg|\hskip 0.1cm\le \hskip 0.1cm  2^{\omega(M)-\omega(g)}\left(\dfrac{1}{3\log 3}\sqrt{k}\log k+\dfrac{13}{2}\sqrt{k}\right).
	\end{align*}
\end{lem}
\begin{proof}
	Let $ g_{1} $ be the least positive integer for which $ M/g_{1} $ is an integer relatively prime to $ g $ and $ M/g_{1}=p_{1}^{\gamma_{1}}p_{2}^{\gamma_{2}}\cdots p_{r}^{\gamma_{r}} $, where $ p_{1},\ldots, p_{r} $ are distinct primes. Then, by inclusion-exclusion, we have (letting $\mu$ denote the M\"obius $\mu$-function)
\[
\sum_{\substack{x<n\le y\\ \gcd(n,p_{1}^{\gamma_{1}}p_{2}^{\gamma_{2}}\cdots p_{r}^{\gamma_{r}} )>1}}\chi(n)=-\sum_{\substack{u\mid p_1\cdots p_r\\ u\neq 1}} \mu(u)\sum_{\substack{x<n\leq y\\ u\mid n}} \chi(n)=-\sum_{\substack{u\mid p_1\cdots p_r\\ u\neq 1}}\mu(u)\chi(u)\sum_{\substack{x/u<n<y/u}} \chi(n).
\]
Hence by Proposition \ref{prop:charactersum}, we have
	\begin{equation}\label{eq223}
	\begin{aligned}
		\bigg|\sum_{\substack{x<n\le y\\ \gcd(n,M/g_{1})>1}}\chi(n)\bigg|\hskip 0.1cm\le \hskip 0.1cm   2^{\omega(M/g_{1})-1} \left(\dfrac{1}{3\log 3}\sqrt{k}\log k+\dfrac{13}{2}\sqrt{k}\right).
	\end{aligned}
	\end{equation}	
	As $ g $ and $g_{1} $ have the same prime factors, we have $\omega(M/g_1)=\omega(M/g)=\omega(M)-\omega(g)$. Plugging this into \eqref{eq223} and noting that $ \chi(n)=0 $ if $ \gcd(n, g_{1})>1$, it follows that 
	\begin{align*}
	\bigg|\sum_{\substack{x<n\le y\\ \gcd(n,M)=1}}\chi(n)\bigg|\hskip 0.1cm=\hskip 0.1cm \bigg|\sum_{\substack{x<n\le y\\ \gcd(n,M/g_{1})=1}}\chi(n)\bigg|&\hskip 0.1cm= \hskip 0.1cm    \Bigg|\sum_{x<n\le y}\chi(n)-\sum_{\substack{x<n\le y\\ \gcd(n,M/g_{1})>1}}\chi(n)\Bigg|  \\
	&\hskip 0.1cm\le \hskip 0.1cm 2^{\omega(M)-\omega(g)}\left(\dfrac{1}{3\log 3}\sqrt{k}\log k+\dfrac{13}{2}\sqrt{k}\right), 
	\end{align*}
where in the last line we have again used Proposition \ref{prop:charactersum}.
\end{proof}

\vskip 0.5cm
\begin{proof}[Proof of Lemma \ref{lem:SxH_estimation}]
First note that if $ n\in \mathcal{S}_{\chi,\eta}$ (defined in \eqref{eq30}), then 
\begin{align*}
\chi_{\alpha}(n)&=\prod_{i=1}^{r}(\eta_{i}\chi_{i})^{\alpha_{i}}(n)=\prod_{i=1}^{r}(\chi_{i}(n))^{2\alpha_{i}}=1
\end{align*}
for any $ \alpha\in U^{r} $. On the other hand, if $n\notin\mathcal{S}_{\chi,\eta}$, then there exists some $ j $ for which either $\chi_j(n)=0$ or $\eta_j\chi_j(n)=-1$. In the former case, $ \chi_{\alpha}(n)=0 $ for any $ \alpha\in U^{r} $, while in the latter case we split the cases $\alpha_j=1$ and $\alpha_j=2$ to obtain that (assuming without loss of generality that $j=r$ for ease of notation)
\[
\sum_{\alpha\in U^{r}}\prod_{j=1}^r(\eta_{j}\chi_{j}(n))^{\alpha_{i}}=\sum_{\alpha\in U^{r-1}}\prod_{j=1}^{r-1}(\eta_{j}\chi_{j}(n))^{\alpha_{i}}\left(\eta_r\chi_r(n)+1\right)=0.
\]
 Hence we see that
\begin{equation*}
\sum\limits_{\alpha\in U^{r}}\chi_{\alpha}(n)=
\begin{cases}
2^{r} & \mbox{if }n\in \mathcal{S}_{\chi,\eta},  \\
0 & \mbox{if }n\notin\mathcal{S}_{\chi,\eta},
\end{cases}
\end{equation*}
and so
\begin{align*}
2^{r}S_{x}(H)&\hskip0.1cm= \hskip 0.1cm2^{r}\sum_{\substack{x<n\le x+H \\ \gcd(n,M)=1 \\ n\in \mathcal{S}_{\chi,\eta} }} 1\\
 &\hskip 0.1cm= \hskip 0.1cm  \sum_{\substack{x<n\le x+H \\ \gcd(n,M)=1}}\sum\limits_{\alpha\in U^{r}}\chi_{\alpha}(n)=\sum\limits_{\alpha\in U^{r}}\sum_{\substack{x<n\le x+H \\ \gcd(n,M)=1}}\chi_{\alpha}(n)\\
&\hskip 0.1cm= \hskip 0.1cm  \sum_{\substack{x<n\le x+H \\ \gcd(n,\Gamma M)=1}} 1+\sum_{\substack{\alpha\in U^{r}\\\alpha\not=\beta_{0}}}\sum_{\substack{x<n\le x+H \\ \gcd(n,M)=1}}\chi_{\alpha}(n).
\end{align*} 
We use the inclusion-exclusion principle to bound the first term from below by 
\begin{align*}
\sum_{\substack{x<n\le x+H \\ \gcd(n,\Gamma M)=1}} 1&\hskip 0.1cm= \hskip 0.1cmH \sum\limits_{u\mid \Gamma M}\dfrac{\mu(u)}{u}-\sum\limits_{u\mid \Gamma M}\mu(u)\left(\left\{\dfrac{x}{u}\right\}+\left\{\dfrac{x+H}{u}\right\}\right)  \\
&\hskip 0.1cm\ge \hskip 0.1cm H\dfrac{\phi(\Gamma M)}{ \Gamma M}-2^{\omega(\Gamma M)+1},
\end{align*}
where $\{y\}:=y-\lfloor y\rfloor$ denotes the \begin{it}fractional part\end{it} of $y\in\R$. Since the $\chi_j$ are independent, all of the characters in the second term are nonprincipal, and hence Lemma \ref{lem:charactersumgcd} may be used to obtain the lower bound 
\begin{align*}
\sum_{\substack{\alpha\in U^{r}\\\alpha\not=\beta_{0}}}\sum_{\substack{x<n\le x+H \\ \gcd(n,M)=1}}\chi_{\alpha}(n)\ge -2^{\omega(M)}(2^{r}-1)\left(\dfrac{1}{3\log 3}\sqrt{\Gamma}\log \Gamma+\dfrac{13}{2}\sqrt{\Gamma}\right).
\end{align*}
Combining these, we obtain
\begin{align*}
S_{x}(H)\hskip 0.1cm\ge \hskip 0.1cm H\dfrac{\phi(\Gamma M)}{\Gamma M2^{r}}-2^{\omega(\Gamma M)-r+1}-2^{\omega(M)}\dfrac{2^{r}-1}{2^{r}}\left(\dfrac{1}{3\log 3}\sqrt{\Gamma}\log \Gamma+\dfrac{13}{2}\sqrt{\Gamma}\right).
\end{align*}
\end{proof}
 
\begin{re}\label{re2}
	Given a discriminant $ D $ and $ n\in\N $, let $ p_{1}, p_{2},\ldots, p_{s}$ be the distinct odd prime divisors of $ d:=|D|$, $\nu_{0}(n):=\left(\frac{-4}{n}\right)$, $ \nu_{1}(n):=\left(\frac{8}{n}\right)$, and $ \chi_{i}(n):=(n/p_{i}) $, where $ (\cdot/p_{i}) $ is the Legendre symbol, $ i=1,\cdots,s $. Then the value of the Kronecker symbol $ (D/n) $ is determined by the value at $n$ of these characters (for $ D<0 $, see \cite[Chap. 1, \S 3, p.\hskip 0.1cm50]{cox_primes_2013}). 
	
	\vskip 0.5cm
		\makeatletter\def\@captype{table}\makeatother
		\renewcommand\tabcolsep{0.4cm}
		\hskip  -0.45cm	\begin{tabular}{ccccc}
			\toprule
			$ D>0 $     & & characters & $\Gamma$ & $ d $ \\
			\hline
			$   D \equiv 1\pmod{4}  $& &$ \chi_{1},\cdots,\chi_{s} $& $ p_{1}\cdots p_{s} $  &  $ p_{1}^{\alpha_{1}}\cdots p_{s}^{\alpha_{s}} $  \\
			\hline
			$D\equiv 0\pmod{4}$ &$ D=4k  $                &      &   &  \\
			&$ k\equiv 1\pmod{4} $ &  $ \chi_{1},\cdots,\chi_{s}  $& $ p_{1}\cdots p_{s}  $ & $ 4p_{1}^{\alpha_{1}}\cdots p_{s}^{\alpha_{s}}  $  \\
			&$ k\equiv 3\pmod{4} $ &  $ \nu_{0},\chi_{1},\cdots,\chi_{s}  $& $ 4p_{1}\cdots p_{s}  $ & $ 4p_{1}^{\alpha_{1}}\cdots p_{s}^{\alpha_{s}}  $  \\
			&$ k\equiv 6\pmod{8} $ &  $ \nu_{0}\nu_{1},\chi_{1},\cdots,\chi_{s}  $& $ 8p_{1}\cdots p_{s}  $ & $ 8p_{1}^{\alpha_{1}}\cdots p_{s}^{\alpha_{s}}  $  \\
			&$ k\equiv 2\pmod{8} $ &  $  \nu_{1},\chi_{1},\cdots,\chi_{s}  $& $ 8p_{1}\cdots p_{s}  $ & $ 8p_{1}^{\alpha_{1}}\cdots p_{s}^{\alpha_{s}}  $  \\
			&$ k\equiv 4\pmod{8} $ &  $  \nu_{0},\chi_{1},\cdots,\chi_{s}  $& $ 4p_{1}\cdots p_{s}  $ & $ 16p_{1}^{\alpha_{1}}\cdots p_{s}^{\alpha_{s}}  $  \\
			&$ k\equiv 0\pmod{8} $ &  $  \nu_{0},\nu_{1},\chi_{1},\cdots,\chi_{s}  $& $ 8p_{1}\cdots p_{s}  $ & $ 2^{5+t}p_{1}^{\alpha_{1}}\cdots p_{s}^{\alpha_{s}} $  \\
			\bottomrule
		\end{tabular}
		\vskip 0.5cm

	It is not difficult to verify that $ \Gamma/\phi(\Gamma)\le d/\phi(d) $, $ \omega(\Gamma)\le \omega(d)$, and $ \Gamma\le d $. Also, note that $ r\le \omega(\Gamma)+1 $, where $ r $ denotes the number of characters. By Lemma \ref{lem:SxH_estimation}, we see that $ S_{0}(H)>0 $ if
	\begin{multline}\label{eqn:Hbound}
	H>\dfrac{2dM2^{\omega(dM)}}{\phi(dM)}\left(\dfrac{1}{3\log 3}\sqrt{d}\log d+\dfrac{13}{2}\sqrt{d}+1\right)\\
	\ge \dfrac{\Gamma M}{\phi(\Gamma M)}\left(2^{\omega(\Gamma M)+1}+2^{\omega(M)}(2^{r}-1)\left(\dfrac{1}{3\log 3}\sqrt{\Gamma}\log \Gamma+\dfrac{13}{2}\sqrt{\Gamma}\right)\right).
	\end{multline}
\end{re}


Besides an explicit bound for $ S_{x}(H) $, we also need explicit upper bounds  for $n/\phi(n)$ and $\omega(n)$, which are given by Rosser and Schoenfeld \cite[Theorem 15]{rosser_approximate_1962} and Robin \cite[Th\'{e}or\`{e}me 12]{robin_estimation_1983}, respectively.

\begin{prop}[Rosser -- Schoenfeld, Robin]\label{prop:phi_omega_estimate}
	For $ n\ge 3 $, 
	\vskip 0.2cm	
	\noindent{\rm (i).}	$\dfrac{n}{\phi(n)}\le\dfrac{9}{5}\log\log n+\dfrac{2.51}{\log\log n}  $;
	\vskip 0.15cm
	\noindent{\rm (ii).} $\omega(n)\le \dfrac{\log n}{\log\log n}+1.45743\dfrac{\log n}{(\log\log n)^{2}}$.
	%
\end{prop}

\begin{lem}\label{lem:newprimeanalytic}
For a given non-square discriminant $ D $, let $ M $ be a positive integer satisfying $ M\ge 2 $ and $ \gcd(D,M)=1 $. Set $ d=|D| $. Then there exists some prime $ q\in (0,C_{0}d^{2/3}M^{1/6}) $ such that $ (D/q)=-1$ and $ \gcd(q,M)=1 $, where $ C_{0}:=20664 $ is a constant.
\end{lem}
\begin{proof}
 By assumption $ D\equiv 0,1\pmod{4} $ and $ D $ is not a perfect square, so $ d\ge3 $ and hence $ dM\ge 6 $. Consider the function $ f $ in terms of $d$ and $M$ given by 
	\begin{align*}
	f(d,M):=\dfrac{2dM2^{\omega(dM)}}{\phi(dM)}\left(\dfrac{1}{3\log 3}\sqrt{d}\log d+\dfrac{13}{2}\sqrt{d}+1\right).
	\end{align*}
	By \eqref{eqn:Hbound}, we have $S_{0}(H)>0$ when $ H\ge f(d,M) $. To find an appropriate $ H $, we estimate $ f(d,M) $ explicitly term by term by virtue of Proposition \ref{prop:phi_omega_estimate}  and prove that certain simple functions are nonnegative via a simple application of calculus. Precisely, $ 2^{\omega(dM)}\le 4 $ for $ 6\le dM<11 $,
		\begin{equation} \label{eq31} 
		\dfrac{1}{3\log 3}\sqrt{d}\log d+\dfrac{13}{2}\sqrt{d}+1\le 14
		d^{51/100},
		\end{equation}
		\vskip -0.4cm
		\begin{equation}\label{eq32}
			\dfrac{dM}{\phi(dM)}\le \dfrac{9}{5}\log\log (dM)+\dfrac{2.51}{\log\log (dM)}\le 6(dM)^{1/168},
		\end{equation}
		\vskip -0.4cm
		\begin{equation}\label{eq33} 
		2^{\omega(dM)}\le 2^{\dfrac{\log (dM)}{\log\log (dM)}+\dfrac{1.45743\log (dM)}{(\log\log (dM))^{2}}}\le 123(dM)^{211/1400} \hskip 0.2cm (dM\ge 11).
		\end{equation}
	It follows that $f(d,M)\le C_{0}d^{2/3}M^{1/6}$.
	Now, apply Lemma \ref{lem:SxH_estimation} with $ H= C_{0}d^{2/3}M^{1/6}   $ and $ \eta_{i}'s $ chosen so that $ \prod_{i=1}^{r}\eta_{i}=-1 $. Then $ S_{0}(H)\ge1 $. Hence there exists an integer $ N_{0}\in(0,H) $ such that $ (D/N_{0})=-1 $ and $ \gcd(N_{0},M)=1 $. Accordingly, there exists some prime $ q $ dividing $ N_{0} $ such that $ (D/q)=-1 $ and $ \gcd(q,M)=1 $, from which we conclude that $ q\le N_{0}\le H=C_{0}d^{2/3}M^{1/6}$.
\end{proof}

\section{Local representation over $ \mathbb{Z}_{p} $}\label{sec:local_representation}
\subsection{Notation and setup}
First, we introduce and collect some notation and definitions for the remaining sections. For a given discriminant $ D $, we let
\begin{align*}
\mathbb{P}(D)&:=\{q\,:\, \mbox{$ q $ is prime and } (D/q)=-1\},
\end{align*}
where $ (D/\cdot) $ is the Kronecker symbol. Fix an integer $m>3$ and let $\mathbb{H}$ denote a hyperbolic plane. For $ \ell,\ell_{1},\ell_{2}\in\mathbb{N} $, we define the sets
\begin{align*}
P(\ell):=\;&\mbox{all the prime factors of $ \ell $}, \\
P_{D}(\ell):=\;&\mbox{all the prime factors of $ \ell $ in $ \mathbb{P}(D) $}, \\
P_{m}(\ell_{1},\ell_{2}):=\;&P_{-4\ell_{2}}(\ell_{1})\backslash P(m-2),\\
G_{m}(\ell_{1},\ell_{2}):=\;&P(\gcd(\ell_{1},\ell_{2}))\backslash P(2(m-2)).
\end{align*}
For given positive integers $ a,b$ and $c $, write
\begin{align*}
P_{m}(a,b,c):=\;& P_{m}(a,bc)\cup P_{m}(b,ac)\cup P_{m}(c,ab),\\
G_{m}(a,b,c):=\;&G_{m}(a,b)\cup G_{m}(a,c)\cup G_{m}(b,c),
\intertext{and define the subset $ B_{m}(a,b,c) $ of $ G_{m}(a,b,c) $ by}
B_{m}(a,b,c):=\;&\{p:\,\text{$ \langle a,b,c\rangle_{p} $ is split by $ \mathbb{H} $} \}\subseteq G_{m}(a,b,c).
\end{align*}
It is not difficult to see that $ P_{m}(a,b,c)\cap G_{m}(a,b,c)=\emptyset $. Also, set
\begin{align*}
P_{m-2}:=\;&\mbox{the product of all primes in $ P(m-2)\backslash \{2\} $}, \\ 
P_{ab}:=\;&\mbox{the product of all primes in $P_{m}(a,bc)\cup P_{m}(b,ac)$}, \\ 
P_{ab}^{\prime}:=\;&\mbox{the product of all primes in $ (P_{m}(a,bc)\cup P_{m}(b,ac))\cap P(m-4)$},\\ 
P_{c}:=\;&\mbox{the product of all primes in $P_{m}(c,ab)$}, \\ 
P_{abc}:=\;&\mbox{the product of all primes in $P_{m}(a,b,c)$},
\end{align*}
and the corresponding product to be $ 1 $ if the specified set is empty. 
Put $\rho(\ell):=2^{\omega(\ell)}\ell/\phi(\ell)$ and $ K(a,b,c):=24P_{ab}\rho(P_{abc})$ for short. For convenience, we also let $ \delta=1 $ if $ \ord_{2}(m)\ge 2 $ and $ 0 $ otherwise, and introduce the notation $ \{2\}^{\delta} $ to mean the set $ \{2\} $ if $ \delta=1 $ and $ \emptyset $ otherwise.

The regularity of an $ m $-gonal form $ \triangle_{m,(a_{1},\cdots, a_{s})}$ is closely related to the quadratic form with congruence conditions given by
\[ \varphi_{m,(a_{1},\dots,a_{s})}(x_{1},\dots,x_{s}):=\sum_{i=1}^{s}a_{i}(2(m-2)x_{i}-(m-4))^{2} \]
that arises from completing the square. In this paper we are particularly interested in the case $s=3$. Now we introduce the regularity of such ternary quadratic polynomials, following the definition of B.-K. Oh \cite{oh_representations_2011}.
\begin{defn}\label{def3}
	Let $h$ be a positive integer and $ n$ and $k$ nonnegative integers. If a quadratic polynomial $f $ globally represents all nonnegative integers of the form $ hn+k$ that are locally represented by $ f $, then it is said to be $(h,k)$-regular.
\end{defn}

\begin{re}\label{re1}
	For $ a,b,c\in\mathbb{N} $, $ \triangle_{m,(a,b,c)} $ is regular if and only if $ \varphi_{m,(a,b,c)} $ is  $ (h,k) $-regular, where	$(h,k)=(8(m-2),(m-4)^{2}(a+b+c))$. Note that if $ n $ is locally represented by $ \varphi_{m,(a,b,c)} $, then $ n\equiv (m-4)^{2}(a+b+c) \pmod{8(m-2)}$. Hence $ \varphi_{m,(a,b,c)} $ is regular if and only if $ \varphi_{m,(a,b,c)} $ is $ (h,k) $-regular. Thus we also call $ \varphi_{m,(a,b,c)} $ regular instead of $ (8(m-2),(m-4)^{2}(a+b+c)) $-regular.
\end{re}

\subsection{Watson Transformations}\label{sec:wt}
Following the definitions in \cite{chan_representation_2015} and \cite{ricci_finiteness_nodate}, let $ L$ and $K $ be $ \mathbb{Z} $-lattices on nondegenerate quadratic spaces $ (V,Q) $ and $ (U,Q) $ over $ \mathbb{Q} $, respectively and $ v,u\in V $. A set $ L+v $ is called a \textit{$ \mathbb{Z} $-coset} (or a \textit{lattice translation}); it is called \textit{integral} if $ Q(L+v)\subseteq \mathbb{Z} $. Given a $ \mathbb{Z} $-coset, denote by $ n(L+v) $ the $ \mathbb{Z} $-ideal generated by $ Q(x+v) $ for all $ x\in L $, and call it \textit{primitive} if $ n(L+v)\subseteq \mathbb{Z} $. Clearly, a $ \mathbb{Z} $-coset $ L+v $ that is primitive must be integral.  Two $ \mathbb{Z} $-cosets $ L+v $ and $ K+u $ are \textit{isometric} if there exists an isometry $ \sigma:V\to U $ such that $ \sigma(L)=K $ and $ \sigma(v)-u\in K $. The \textit{conductor} of a $ \mathbb{Z} $-coset is defined by the smallest positive integer $ \mathfrak{c} $ such that $ \mathfrak{c}v\in L $. For $ n\in \mathbb{Q} $, $ n $ is said to be \textit{represented} by a $ \mathbb{Z} $-coset $ L+v $ if there exists $ x\in L $ such that $ Q(x+v)=n $. Let $ L_{p} $ be the localization of $ L $ at $ p $. The representation of $ n\in\mathbb{Q}_{p} $ by a $ \mathbb{Z}_{p} $-coset $ L_{p}+v $ is defined in the same manner. A $ \mathbb{Z} $-coset $ L+v $ is said to be \textit{regular} if it represents all rational numbers that are represented by $ L_{p}+v $ for each prime $ p $, including $ \infty $.

Similar to the case of quadratic forms and lattices, there exists a one-to-one correspondence between the set of equivalence classes of primitive regular complete quadratic polynomials in $ n $ variables over $ \mathbb{Q} $ and the set of isometry classes of primitive regular $ \mathbb{Z} $-cosets on quadratic spaces of dimension $ n $ over $ \mathbb{Q} $ (\cite[p.\hskip 0.1cm 12]{chan_representations_2013} or \cite[p.\hskip 0.1cm 84]{chan_representation_2015}). Hence we have the corresponding concepts for quadratic polynomials (e.g. conductor, integrity, primitivity and completeness, see \cite[p.\hskip 0.1cm 77]{chan_representation_2015}). We only introduce the equivalence here.

\begin{defn}
	Two quadratic polynomials $ f(x)$ and $g(x)$ over $ \mathbb{Q} $ in $ n $ variables are said to be \textit{equivalent} if there exist $ T\in GL_{n}(\mathbb{Z}) $ and $ v\in \mathbb{Z}^{n}  $ such that $ g(x)=f(xT+v)$. 
\end{defn}

Suppose that $ L $ is a ternary $ \mathbb{Z} $-lattice on a quadratic space $ (V,Q) $. As usual, we denote by $ d(L)$ the discriminant and $ n(L) $ the norm of $ L $. For any positive integer $ m $, define
\[ \Lambda_{m}(L):=\{x\in L: Q(x+z)\equiv Q(z)\pmod{m}\;\text{for all $z\in L$}\} \]
and
\[ \Lambda_{m}(L_{p}):=\{x\in L_{p}: Q(x+z)\equiv Q(z)\pmod{m}\;\text{for all $z\in L_{p}$}\}\]
for each prime $ p $.
Let $ p $ be a odd prime. If $ p\nmid n(L) $, define the maps

\begin{equation*}
\lambda_{p}(L):=
\begin{cases}
\Lambda_{p}(L)^{1/p}   & \text{if $ n(\Lambda_{p}(L))=pn(L)$,}   \\
\Lambda_{p}(L)^{1/p^{2}}   & \text{if $ n(\Lambda_{p}(L))=p^{2}n(L) $,}   \\
\end{cases}
\end{equation*}
then $ \lambda_{p} $ sends $ L $ to another lattice on the scaled space $ V^{1/p} $ or $ V^{1/p^{2}} $. Such maps $ \lambda_{p}$ are called Watson's transformations. We require several properties of $ \Lambda_{m}(L) $ and $ \Lambda_{m}(L_{p}) $ (see \cite[Lemma 4.2]{chan_representation_2015}) and a basic fact (\cite[Lemma 2.5]{ricci_finiteness_nodate}).

\begin{lem}\label{lem:LambdaL_property}
	Let $ L $ be a $ \mathbb{Z} $-lattice, $ m $ an integer and $ p $ a prime. Then
	
	{\rm (i)} $ \Lambda_{m}(L) $ is a sublattice of $ L $ and $ \Lambda_{m}(L_{p}) $ is a sublattice of $ L_{p} $.
	
	{\rm (ii)} $ \Lambda_{m}(L)_{p}=\Lambda_{m}(L_{p}) $.
	
	{\rm (iii)} $ \Lambda_{m}(L_{p})=L_{p} $ for $ p\nmid m $.
	
	{\rm (iv)} $ n(\Lambda_{m}(L))\subseteq m\mathbb{Z} $ and $ n(\Lambda_{m}(L_{p}))\subseteq p\mathbb{Z}_{p} $.
	
\end{lem}

\begin{lem}\label{lem:localgloballatticetranslation}
	Let $ L+v $ and $ K+u $ be $ \mathbb{Z}$-cosets. If $ L_{p}+v\subseteq K_{p}+u $ for all primes $ p $, then $ L+v \subseteq K+u$. In particular, $ L+v=K+u $ if and only if $ L_{p}+v=K_{p}+u$ for all primes $ p $.
\end{lem}

 The following lemma allows us to reduce the power of some prime factors of $ d(L) $ by such transformation (\cite[Lemma 4.4]{chan_representation_2015} or \cite[Lemma 2.5]{chan_discriminant_2004}).

\begin{lem}\label{lem:wt_lattice}
	Let $ L $ be a ternary $ \mathbb{Z} $-lattice and $ p $ an odd prime. If $ p^{2}\mid d(L) $, then $ d(\lambda_{p}(L))=d(L)/p^{t} $ for some $ t\in \{1,2,4\} $.
\end{lem}

\begin{re}\label{re:split}
	For a $ \mathbb{Z} $-coset $ L+v $ of conductor $ \mathfrak{c} $, if $ L_{p}$ is split by a hyperbolic plane $ \mathbb{H} $ for an odd prime $ p $ not dividing $ \mathfrak{c} $, then $ L_{p}+v=L_{p} $ represents all of the integers in $ \mathbb{Z}_{p} $.
\end{re}

By Lemma \ref{lem:LambdaL_property} (i), (ii) and (iv), $ \Lambda_{p}(L)_{p}=\Lambda_{p}(L_{p})\subseteq \{x\in L_{p}:Q(x)\in p\mathbb{Z}_{p}\} $ and when $ p $ is odd, the converse containment follows from \cite[Lemma 3.1]{chan_discriminant_2004} under the assumptions that $ p^{2}\mid d(L) $ and $ L_{p} $ is not split by $ \mathbb{H} $. Hence we have the following.

\begin{lem}\label{lem:LambdapLp}
	Let $ L $ be a ternary $ \mathbb{Z} $-lattice and $ p $ an odd prime. If $ p^{2}\mid d(L) $ and $ L_{p} $ is not split by $ \mathbb{H} $, then $ \Lambda_{p}(L)_{p}=\Lambda_{p}(L_{p})=\{x\in L_{p}:Q(x)\in p\mathbb{Z}_{p}\} $.
\end{lem}

For $ \mathbb{Z} $-cosets, we have a result analogous to Lemma \ref{lem:wt_lattice}, which is proved by Chan and Ricci \cite[Proposition 4.6]{chan_representation_2015} (or \cite[Lemma 2.6]{ricci_finiteness_nodate}). From its proof and Lemma \ref{lem:LambdapLp},  we see that the condition ``$ L_{p}+v $ does not behave well at $ p $" in \cite[Proposition 4.6]{chan_representation_2015} can be replaced by ``$p^{2}\mid d(L)$ and $ L_{p} $ is not split by $ \mathbb{H} $". Hence we are able to reformulate their proposition and prove it by following their arguments.


\begin{lem}\label{lem:wt_latticetranslation}
	Let $ L+v $ be a primitive regular ternary $ \mathbb{Z} $-coset with conductor $ \mathfrak{c} $ and $ p $ an odd prime with $ p\nmid \mathfrak{c}$. Suppose that $ p^{2}\mid d(L) $ and $ L_{p} $ is not split by $ \mathbb{H} $. Then $ \lambda_{p}(L)+p^{j}v $ is a primitive regular $ \mathbb{Z} $-coset of conductor $ \mathfrak{c} $, where $ j $ is the order of $ p $ modulo $ \mathfrak{c} $.
\end{lem}

\begin{proof}
	  Let $ L $ be on the quadratic space $ (V,Q) $ and $ j $ the order of $ p $ modulo $ \mathfrak{c} $. We assert that
	 \begin{equation}\label{eq:Lambda}
	 \Lambda_{p}(L)_{q}+p^{j}v=
	 \begin{cases}
	 L_{q}+v  & \text{if $ q\mid \mathfrak{c} $,} \\
	 \Lambda_{p}(L)_{q}      & \text{if $ q=p $,} \\
	 L_{q}  & \text{if $ q\nmid p\mathfrak{c}$.}
	 \end{cases}
	 \end{equation}
	  For $ q\mid \mathfrak{c} $, since $ p\nmid \mathfrak{c} $ and $ p^{j}v-v\in L_{q} $, $\Lambda_{p}(L)_{q}+p^{j}v=L_{q}+p^{j}v=L_{q}+v$ by Lemma \ref{lem:LambdaL_property} (iii). For $ q\nmid p\mathfrak{c} $, $\Lambda_{p}(L)_{q}+p^{j}v=\Lambda_{p}(L)_{q}=L_{q}$ by Lemma \ref{lem:LambdaL_property} (iii) again. For $ q=p $, since $ p^{2}\mid d(L) $ and $ L_{p} $ is not split by $ \mathbb{H} $,  $ \Lambda_{p}(L_{p})=\{x\in L_{p}:Q(x)\in p\mathbb{Z}_{p}\} $ by Lemma \ref{lem:LambdapLp}. Clearly, $ Q(p^{j}v)\in p\mathbb{Z}_{p} $ and so $ p^{j}v\in \Lambda_{p}(L_{p})=\Lambda_{p}(L)_{p} $. Hence $ \Lambda_{p}(L)_{p}+p^{j}v=\Lambda_{p}(L)_{p}$. Therefore, \eqref{eq:Lambda} is proved.
	   
	   Suppose that $ n$ is represented by the genus of $ \Lambda_{p}(L)+p^{j}v $. By \eqref{eq:Lambda}, $\Lambda_{p}(L)_{q}+p^{j}v=L_{q}+v$ for $ q\mid \mathfrak{c} $ or $ q\nmid p\mathfrak{c} $. By Lemma \ref{lem:LambdaL_property} (i) and (ii), $ \Lambda_{p}(L)_{p}=\Lambda_{p}(L_{p})\subseteq L_{p}=L_{p}+v $ and hence $\Lambda_{p}(L)_{q}+p^{j}v\subseteq L_{q}+v$ for each prime $ q $. So $ n$ is represented by the genus of $ L+v $. Since $ L+v $ is regular, $n $ is represented by $ L+v $. Therefore, $ n=Q(x+v)$ for some $ x\in L $. Since $ x+v\in L_{q} $ for $ q\nmid p\mathfrak{c} $ and $ x+v\in L_{q}+v $ for $ q\mid \mathfrak{c} $, it follows from \eqref{eq:Lambda} that $ x+v\in \Lambda_{p}(L)_{q}+p^{j}v $ for $ q\not=p $. For $ q=p $, since $ n $ is represented by $ \Lambda_{p}(L)_{p}+p^{j}v=\Lambda_{p}(L)_{p} $, $ p\mid n $ by Lemma \ref{lem:LambdapLp}. It follows that $ p\mid Q(x+v)$ and so $ x+v\in \Lambda_{p}(L)_{p} $ by Lemma \ref{lem:LambdapLp} again. Hence $ x+v\in  \Lambda_{p}(L)_{p}+p^{j}v $ by \eqref{eq:Lambda}. Thus $ x+v\in \Lambda_{p}(L)_{q}+p^{j}v $ for each prime $ q $ and so $ x+v\in \Lambda_{p}(L)+p^{j}v $ by Lemma \ref{lem:localgloballatticetranslation}. Therefore, $ \Lambda_{p}(L)+p^{j}v $ is regular. 
	   
	    Since scaling of $ \Lambda_{p}(L) $ preserves the conductor and the regularity, $ \lambda_{p}(L)+p^{j}v$ is of conductor $ \mathfrak{c} $ and regular. Suppose that $ n(\Lambda_{p}(L))=p^{i}n(L) $ for some $ i\in\{1,2\}$. By the definition of $ \lambda_{p} $, $ \lambda_{p}(L) $ is a $ \mathbb{Z} $-lattice on the quadratic space $ (V,Q^{\prime}) $, where $ Q^{\prime}(x)=p^{-i}Q(x) $. By \eqref{eq:Lambda},
	    \begin{equation*}
	    \Lambda_{p}(L)_{q}+p^{j}v=
	    \begin{cases}
	    \mathbb{Z}_{q}  & \text{if $ q\mid \mathfrak{c} $ or $ q\nmid p\mathfrak{c}$,} \\
	    p^{i}\mathbb{Z}_{p} & \text{if $ q=p $,} \\
	    \end{cases}
 	   \end{equation*}
	    and hence $ n(\lambda_{p}(L)+p^{j}v)=\mathbb{Z} $, showing the primitivity. 
\end{proof}

Given a primitive regular $ \mathbb{Z} $-coset $ L+v $ and an odd prime $ p\nmid \mathfrak{c} $ for which $ p^{2}\mid d(L) $ and $ L_{p} $ is not split by $ \mathbb{H} $, we are able to iteratively obtain primitive regular $ \mathbb{Z} $-cosets of conductor $ \mathfrak{c} $ until $ p^{2}\nmid d(L) $ or $ L_{p} $ is split by $ \mathbb{H} $ by applying Lemma \ref{lem:wt_latticetranslation} repeatedly, say $ \lambda_{p}^{\ell}(L)+p^{t}v $, where $ t>0$, $\ell \in \mathbb{N} $, and $ p^{t}\equiv 1\pmod{\mathfrak{c}} $. Also, $ d(\lambda_{p}^{\ell}(L))\mid d(L) $ by Lemma \ref{lem:wt_lattice}. We define the successive operations above by $ \tau_{p}(L+v):=\lambda_{p}^{\ell}(L)+p^{t}v $.

\begin{lem}\label{lem:wt_mgonalforms}
	Let $ m\ge 3 $ be a fixed integer. Given a primitive regular ternary $ m $-gonal form $\triangle_{m,(a,b,c)} $ associated with $ G_{m}(a,b,c)\not=B_{m}(a,b,c) $, there exists a primitive regular form $\triangle_{m,(a^{\prime},b^{\prime},c^{\prime})} $ such that $ a^{\prime}b^{\prime}c^{\prime}\mid abc $ and $ G_{m}(a^{\prime},b^{\prime},c^{\prime})=B_{m}(a^{\prime},b^{\prime},c^{\prime})$. 
\end{lem}
\begin{proof}
	Fix $ m\ge 3 $, clearly a ternary $ m $-gonal form $\triangle_{m,(a,b,c)} $ represents $ n $ if and only if $ \varphi_{m,(a,b,c)} $ represents $ 8(m-2)n+(m-4)^{2}(a+b+c) $. Then we associate $ \varphi_{m,(a,b,c)} $ with a $ \mathbb{Z} $-coset $ L+v $ on the quadratic space $ (Q,V) $ over $ \mathbb{Q} $; that is
	$L\cong \langle \mathfrak{c}(m-4)a/2,  \mathfrak{c}(m-4)b/2, \mathfrak{c}(m-4)c/2\rangle$ under the standard basis $ \{e_{1},e_{2},e_{3}\} $ and $ v=-d(e_{1}+e_{2}+e_{3})/\mathfrak{c}\in V$,
	where $d=(m-4)/\gcd(m-4,2(m-2)) $. One can check that $ \varphi_{m,(a,b,c)} $ represents $ 8(m-2)n+(m-4)^{2}(a+b+c) $ if and only if $ L+v $ represents
	\begin{equation*}
	hn+k:=
	\begin{cases}
	4\mathfrak{c}n+Q(v)  & \text{if $ \ord_{2}(m)=0 $,} \\
	2\mathfrak{c}n+Q(v)      & \text{if $ \ord_{2}(m)=1$,} \\
	\mathfrak{c}n+Q(v)  & \text{if $ \ord_{2}(m)>1 $,}
	\end{cases}
	\end{equation*}
	where the conductor $ \mathfrak{c} $ of $ L+v $ is given by
	\begin{equation*}
	\mathfrak{c}=
	\begin{cases}
	2(m-2)  & \text{if $ \ord_{2}(m)=0 $,} \\
	m-2     & \text{if $ \ord_{2}(m)=1$,} \\
	(m-2)/2 & \text{if $ \ord_{2}(m)>1 $.}
	\end{cases}
	\end{equation*}
	Hence we always have $ \mathfrak{c}\mid 2(m-2) $.
	
	Assume that $\triangle_{m,(a,b,c)} $ is primitive and regular. Then by the relation above, we see that $ \varphi_{m,(a,b,c)} $ is primitive and $(8(m-2),(m-4)^{2}(a+b+c))$-regular (and so it is regular by Remark \ref{re1}). Hence $ L+v $ is primitive and regular. We let 
\[
G_{m}(a,b,c)\backslash B_{m}(a,b,c)=\{p_{1},\cdots,p_{s}\}.
\]
Then for $ i=1,\ldots,s $, $ p_{i}\nmid \mathfrak{c} $ follows from $ p_{i}\nmid 2(m-2) $. Applying the operation $ \tau:=\tau_{p_{1}}\circ\cdots \circ \tau_{p_{s}} $ to $ L+v $, Lemma \ref{lem:wt_latticetranslation} implies that we obtain a primitive regular $ \mathbb{Z} $-coset $ K+u $ of conductor $ \mathfrak{c} $. Also, $ d(K)\mid d(L) $ and $ p^{2}\nmid d(K) $ or $ K_{p} $ is split by $ \mathbb{H} $ for each $p\in G_{m}(a,b,c)\backslash B_{m}(a,b,c) $. The idea is to relate the regularity of $ K+u $ to the regularity of another $ \triangle_{m,(a^{\prime},b^{\prime},c^{\prime})} $. 

In order to obtain a connection with regularity of a form $ \triangle_{m,(a^{\prime},b^{\prime},c^{\prime})} $, we first need to show that representation by $ K+u $ corresponds to representation by some $ \varphi_{m,(a^{\prime},b^{\prime},c^{\prime})} $. For this, let \[ K\cong \langle \mathfrak{c}(m-4) a^{\prime}/2,\mathfrak{c}(m-4)b^{\prime}/2,\mathfrak{c}(m-4)c^{\prime}/2\rangle\] and $ u=\ell v $, where $ \ell=p_{1}^{t_{1}}\cdots p_{s}^{t_{s}}$ and $p_{i}^{t_{i}}\equiv 1\pmod{\mathfrak{c}}$. Then $ \ell\equiv 1\pmod{\mathfrak{c}} $. Put $ \ell=1+\mathfrak{c}\ell_{0} $ and thus $ u=\mathfrak{c}\ell_{0}v+v $. Hence the quadratic polynomial $Q_{K+u} $ associated with the $ \mathbb{Z} $-coset $ K+u $ is given by
	\begin{align*}
	Q_{K+u}(x,y,z)=&\dfrac{\mathfrak{c}(m-4)}{2}\left(a^{\prime}\left(x-\ell_{0}d-\dfrac{d}{\mathfrak{c}}\right)^{2}+b^{\prime}\left(y-\ell_{0}d-\dfrac{d}{\mathfrak{c}}\right)^{2}+c^{\prime}\left(z-\ell_{0}d-\dfrac{d}{\mathfrak{c}}\right)^{2}\right).
\textsc{}	\end{align*}
	Consider the quadratic polynomial $ \varphi_{m,(a^{\prime},b^{\prime},c^{\prime})}$, which satisfies
	\begin{align*}
	\varphi_{m,(a^{\prime},b^{\prime},c^{\prime})}(x,y,z)=&\dfrac{\mathfrak{c}(m-4)}{2}\left(a^{\prime}\left(x-\dfrac{d}{\mathfrak{c}}\right)^{2}+b^{\prime}\left(y-\dfrac{d}{\mathfrak{c}}\right)^{2}+c^{\prime}\left(z-\dfrac{d}{\mathfrak{c}}\right)^{2}\right).
	\end{align*}
	Since $ \ell_{0}d\in\mathbb{Z} $, $ Q_{K+u}(x,y,z)=\varphi_{m,(a^{\prime},b^{\prime},c^{\prime})}(x-\ell_{0}d,y-\ell_{0}d,z-\ell_{0}d) $ and so they are equivalent. Since $Q_{K+u}$ is primitive and regular, so is $ \varphi_{m,(a^{\prime},b^{\prime},c^{\prime})} $. Again using Remark \ref{re1}, we see that $ \varphi_{m,(a^{\prime},b^{\prime},c^{\prime})} $ is $ (8(m-2),(m-4)^{2}(a+b+c))$-regular in particular and so $\triangle_{m,(a^{\prime},b^{\prime},c^{\prime})} $ is regular. Also, $ \triangle_{m,(a^{\prime},b^{\prime},c^{\prime})} $ is clearly primitive.
	Define the corresponding set $ G_{m}(a^{\prime},b^{\prime},c^{\prime}) $ for $ \varphi_{m,(a^{\prime},b^{\prime},c^{\prime})}  $. Next, we show
	\begin{equation}\label{eq:incluGB}
	\begin{aligned}
	G_{m}(a^{\prime},b^{\prime},c^{\prime})\backslash B_{m}(a^{\prime},b^{\prime},c^{\prime})\subseteq G_{m}(a,b,c)\backslash B_{m}(a,b,c).
	\end{aligned}
	\end{equation}
	Let $ p\in G_{m}(a^{\prime},b^{\prime},c^{\prime}) $. Then $ p\nmid 2(m-2) $ and so $ p\nmid \mathfrak{c} $. One can check that
	\begin{align*}
	p^{2}\mid a^{\prime}b^{\prime}c^{\prime}\mid d(K)\mid d(L)=\mathfrak{c}^{3}(m-2)^{3}abc/64.
	\end{align*}
	This implies that $ p^{2}\mid abc $ and so $ p\in G_{m}(a,b,c) $, and hence  $G_{m}(a^{\prime},b^{\prime},c^{\prime})\subseteq G_{m}(a,b,c)$. Note that $ B_{m}(a,b,c)\subseteq B_{m}(a^{\prime},b^{\prime},c^{\prime}) $ by \cite[Lemma 2.6]{chan_discriminant_2004} (or \cite[Lemma 2.7]{chan_discriminant_2004} with $ \beta=0 $ and $ \gamma\ge 2 $). Combining these, we have 
\[
B_{m}(a,b,c)\subseteq B_{m}(a^{\prime},b^{\prime},c^{\prime})\subseteq G_{m}(a^{\prime},b^{\prime},c^{\prime})\subseteq G_{m}(a,b,c),
\]
 showing \eqref{eq:incluGB}. Now suppose $ p\in G_{m}(a^{\prime},b^{\prime},c^{\prime})\backslash B_{m}(a^{\prime},b^{\prime},c^{\prime})\not=\emptyset $. Since $p\in G_m(a',b',c')$, $p$ must divide two of $a'$, $b'$, and $c'$, and hence $ p^{2}\mid d(K)$, while $p\notin B_{m}(a',b',c')$ implies that $K_{p} $ is not split by $ \mathbb{H} $. But on the other hand, $ p\in G_{m}(a,b,c)\backslash B_{m}(a,b,c) $ from the containment \eqref{eq:incluGB} and so $ p^{2}\nmid d(K) $ or $ K_{p} $ is split by $ \mathbb{H} $ by the construction of $ K+u $, which is impossible. Thus $ G_{m}(a^{\prime},b^{\prime},c^{\prime})\backslash B_{m}(a^{\prime},b^{\prime},c^{\prime}) $ must be empty and hence $ G_{m}(a^{\prime},b^{\prime},c^{\prime})= B_{m}(a^{\prime},b^{\prime},c^{\prime})$.
\end{proof}

\begin{re}\label{re:wt_mgonalforms}
	Reordering the coefficients of the form $ \triangle_{m,(a^{\prime},b^{\prime},c^{\prime})} $ obtained by Lemma \ref{lem:wt_mgonalforms}, say $\triangle_{m,(a^{\prime\prime},b^{\prime\prime},c^{\prime\prime})}$, it is not difficult to see that $\triangle_{m,(a^{\prime\prime},b^{\prime\prime},c^{\prime\prime})}$ is still primitive and regular. Also, $ a^{\prime\prime}b^{\prime\prime}c^{\prime\prime}\mid abc $ and $ G_{m}(a^{\prime\prime},b^{\prime\prime},c^{\prime\prime})=B_{m}(a^{\prime\prime},b^{\prime\prime},c^{\prime\prime}) $. Hence we may require $ a^{\prime}\le b^{\prime}\le c^{\prime} $ in Lemma \ref{lem:wt_mgonalforms} further.
\end{re}

\subsection{Representation by $ \varphi_{m,(a,b,c)} $}
Based on the study of Dickson \cite{dickson_modern_1939}, Jones \cite{jones_new_1931} and Chan and B.-K. Oh \cite{chan_representations_2013}, we build sufficient conditions for a positive integer to be represented by $ \varphi_{m,(a,b,c)} $ over $ \mathbb{Z}_{p} $. Note that for any $ n\in\mathbb{N}$, we have $n \mathop{\rightarrow}\limits_{\mathbb{R}} \varphi_{m,(a,b,c)} $ if $ a,b$ and $c$ are positive integers, so we may suppose $ p\not=\infty $. We require two well-known lemmas \cite[Theorem 1 and Theorem 3, p.\hskip 0.1cm 41--42]{borevich_number_1986} in order to determine necessary conditions for solvability over $\Z_p$ to occur. 


\begin{lem}\label{lem:congeq_padic}
	Let $ F(x_{1},\cdots,x_{\ell})\in\mathbb{Z}[x_{1},\cdots,x_{\ell}] $. Then $ F(x_{1},\cdots,x_{\ell})\equiv 0\pmod{p^{t}} $ is solvable for all $ t\ge 1 $ if and only if the equation $ F(x_{1},\cdots,x_{\ell})=0 $ is solvable in $ \mathbb{Z}_{p} $.
\end{lem}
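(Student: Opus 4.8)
The plan is to recognize this as a compactness statement over the $p$-adic integers, so that the two directions become, respectively, a trivial reduction and an application of the finite intersection property. The reverse implication is immediate: if $\mathbf{a}\in\mathbb{Z}_p^{\ell}$ satisfies $F(\mathbf{a})=0$, then applying the reduction ring homomorphism $\mathbb{Z}_p\to\mathbb{Z}/p^{t}\mathbb{Z}$ coordinatewise shows that the image $\overline{\mathbf{a}}$ solves $F\equiv 0\pmod{p^{t}}$, and lifting $\overline{\mathbf{a}}$ to integer representatives gives an integer solution of the congruence for every $t\ge 1$.

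For the forward implication I would first set up the relevant closed sets. Since $F\colon \mathbb{Z}_p^{\ell}\to\mathbb{Z}_p$ is a polynomial map it is continuous, so for each $t\ge 1$ the set $U_{t}:=\{\mathbf{x}\in\mathbb{Z}_p^{\ell}:F(\mathbf{x})\equiv 0\pmod{p^{t}}\}=F^{-1}(p^{t}\mathbb{Z}_p)$ is closed in $\mathbb{Z}_p^{\ell}$, being the preimage of the closed set $p^{t}\mathbb{Z}_p$. These sets are nested, $U_{1}\supseteq U_{2}\supseteq\cdots$, because $p^{t+1}\mathbb{Z}_p\subseteq p^{t}\mathbb{Z}_p$. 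Next I would check that each $U_{t}$ is nonempty: given an integer solution $\mathbf{a}$ of $F\equiv 0\pmod{p^{t}}$, view its coordinates as elements of $\mathbb{Z}_p$; then $F(\mathbf{a})\equiv 0\pmod{p^{t}}$ already as $p$-adic integers, so $\mathbf{a}\in U_{t}$. By hypothesis such a solution exists for every $t$, so every $U_{t}$ is a nonempty closed subset.

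The crux is then compactness. The ring $\mathbb{Z}_p$ is compact, being the inverse limit $\varprojlim\mathbb{Z}/p^{t}\mathbb{Z}$ of finite rings, and hence $\mathbb{Z}_p^{\ell}$ is compact as well. A decreasing chain of nonempty closed subsets of a compact space has nonempty intersection, so $\bigcap_{t\ge 1}U_{t}\neq\emptyset$; pick any $\mathbf{x}$ in this intersection. Then $F(\mathbf{x})\in\bigcap_{t\ge 1}p^{t}\mathbb{Z}_p=\{0\}$, since the only $p$-adic integer divisible by arbitrarily high powers of $p$ is $0$, whence $F(\mathbf{x})=0$ with $\mathbf{x}\in\mathbb{Z}_p^{\ell}$. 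There is no genuine analytic difficulty; the only point requiring care is identifying the solvability of all finite congruences with membership in a nested family of closed sets and extracting a common point. An equivalent combinatorial route would replace compactness by K\"onig's lemma applied to the finitely branching tree whose level-$t$ vertices are the solutions of $F\equiv 0\pmod{p^{t}}$ in $(\mathbb{Z}/p^{t}\mathbb{Z})^{\ell}$, with edges given by reduction, an infinite branch then assembling into a coherent element of $\varprojlim(\mathbb{Z}/p^{t}\mathbb{Z})^{\ell}=\mathbb{Z}_p^{\ell}$ that is a zero of $F$.
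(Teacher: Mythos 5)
Your proof is correct, but there is nothing in the paper to compare it against: the paper does not prove this lemma at all, quoting it (together with Lemma \ref{lem:hensel}) directly from Borevich--Shafarevich \cite[Theorem 1, p.\hskip 0.1cm 41]{borevich_number_1986}. The textbook's own argument builds a $p$-adic solution by repeated pigeonhole: among the given solutions mod $p^t$, infinitely many agree mod $p$, among those infinitely many agree mod $p^2$, and so on, and the resulting coherent sequence converges to a zero of $F$. That is precisely the sequential-compactness/K\"onig's-lemma content of your argument, so your topological packaging --- the nested nonempty closed sets $U_t=F^{-1}(p^t\mathbb{Z}_p)$, the finite intersection property of the compact space $\mathbb{Z}_p^{\ell}$, and the identity $\bigcap_{t\ge 1}p^t\mathbb{Z}_p=\{0\}$ --- is the same proof in cleaner form, and your K\"onig's-lemma variant at the end is essentially the textbook proof verbatim. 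All the steps you need are verified: the reverse direction by reduction and lifting of representatives, the nonemptiness and closedness of each $U_t$, and the conclusion that a common point is an exact zero. The proposal is complete and correct.
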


\begin{lem}\label{lem:hensel}
		Let $ F(x_{1},\cdots,x_{\ell})\in\mathbb{Z}[x_{1},\cdots,x_{\ell}] $. If $ \omega_{1},\ldots,\omega_{\ell}\in\mathbb{Z}_{p} $ is a solution of the following system of congruences
		\begin{align*}
		 F(\omega_{1},\cdots,\omega_{\ell})& \equiv 0 \pmod{p^{2t+1}}\,, \\
		F_{x_{i}} (\omega_{1},\cdots,\omega_{\ell})& \equiv 0 \pmod{p^{t}}\,,  \\
		 F_{x_{i}} (\omega_{1},\cdots,\omega_{\ell})& \not\equiv 0 \pmod{p^{t+1}}\,,  		 
		\end{align*}
		for some $ i $ ($ 1\le i\le \ell $), where $ t $ is a nonnegative integer and $F_{x}:=\frac{\partial F}{\partial x}$ denotes the derivative with respect to $x$, then the equation $ F(x_{1},\cdots,x_{\ell})=0 $ is solvable in $ \mathbb{Z}_{p} $.
\end{lem}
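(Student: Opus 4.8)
The plan is to recognize this as the strong (multivariate) form of Hensel's lemma and to reduce it to the one-variable case by freezing all coordinates except the distinguished index $i$. Fix the $i$ for which the hypotheses hold and set $g(x):=F(\omega_{1},\dots,\omega_{i-1},x,\omega_{i+1},\dots,\omega_{\ell})\in\Z_p[x]$. Then $g(\omega_i)=F(\omega_1,\dots,\omega_\ell)$ and $g'(\omega_i)=F_{x_i}(\omega_1,\dots,\omega_\ell)$, so the three congruences say exactly that $\ord_p(g(\omega_i))\ge 2t+1$ while $\ord_p(g'(\omega_i))=t$. In particular $\ord_p(g(\omega_i))>2\ord_p(g'(\omega_i))$, the sharp inequality under which Newton's method converges $p$-adically; any root $\xi_i\in\Z_p$ of $g$ then produces the zero $(\omega_1,\dots,\xi_i,\dots,\omega_\ell)\in\Z_p^\ell$ of $F$.

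First I would build the Newton sequence $\omega_i^{(n+1)}=\omega_i^{(n)}-g(\omega_i^{(n)})/g'(\omega_i^{(n)})$ with $\omega_i^{(0)}=\omega_i$, and prove by induction the two invariants $\ord_p\big(g(\omega_i^{(n)})\big)\ge 2t+1+n$ and $\omega_i^{(n)}\equiv\omega_i\pmod{p^{t+1}}$. For the inductive step I expand $g(\omega_i^{(n)}+hp^{t+1+n})$ as a polynomial in the increment: the constant and linear terms are $g(\omega_i^{(n)})+g'(\omega_i^{(n)})\,hp^{t+1+n}$, while every term of degree $\ge 2$ is divisible by $p^{2(t+1+n)}$ and hence vanishes modulo $p^{2t+2+n}$. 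Writing $g(\omega_i^{(n)})=c\,p^{2t+1+n}$ and $g'(\omega_i^{(n)})=u\,p^{t}$ with $u$ a unit, I choose $h\equiv -c/u\pmod p$ so that the sum gains one further power of $p$; this is precisely one Newton step and advances the $g$-invariant from $n$ to $n+1$.

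The hard part will be keeping the valuation of the derivative pinned at $t$ throughout, since both the well-definedness and the convergence rate of the iteration rely on it. This is exactly what the second invariant secures: because $g'\in\Z_p[x]$ and $\omega_i^{(n)}\equiv\omega_i\pmod{p^{t+1}}$, we get $g'(\omega_i^{(n)})\equiv g'(\omega_i)\pmod{p^{t+1}}$, so $\ord_p(g'(\omega_i^{(n)}))=t$ for all $n$ and $g'(\omega_i^{(n)})$ is never zero. Consecutive approximations then satisfy $\omega_i^{(n+1)}-\omega_i^{(n)}\equiv 0\pmod{p^{t+1+n}}$, so $\{\omega_i^{(n)}\}$ is Cauchy and converges to some $\xi_i\in\Z_p$; continuity of $g$ together with $\ord_p(g(\omega_i^{(n)}))\to\infty$ forces $g(\xi_i)=0$, which finishes the reduction. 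As the statement is classical, a shorter route is simply to invoke the one-variable strong Hensel lemma after the reduction in the first paragraph, citing the same source.
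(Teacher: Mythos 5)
Your proof is correct. A point of comparison worth knowing: the paper does not prove this lemma at all --- it is quoted as a ``well-known lemma'' with a citation to Borevich--Shafarevich (Theorem 3, p.~41--42), so your argument is a self-contained substitute for that citation rather than a parallel to an argument in the paper. What you wrote is in fact the standard proof (and essentially the one in the cited reference): freeze all variables except $x_i$ to reduce to $g(x):=F(\omega_1,\dots,x,\dots,\omega_\ell)\in\Z_p[x]$ with $\ord_p(g(\omega_i))\ge 2t+1>2t=2\,\ord_p(g'(\omega_i))$, then run Newton's iteration. Your two invariants are the right ones, and the key maintenance step --- that $\omega_i^{(n)}\equiv\omega_i\pmod{p^{t+1}}$ forces $\ord_p\bigl(g'(\omega_i^{(n)})\bigr)=t$ for all $n$, keeping the iteration well defined with a fixed convergence rate --- is exactly the point where a sloppier write-up would fail. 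Two small remarks: in the step where you write $g(\omega_i^{(n)})=c\,p^{2t+1+n}$, the element $c\in\Z_p$ need not be a unit (the invariant is only a lower bound on the valuation), but your choice $h\equiv -c/u\pmod p$ is still available and is precisely the Newton increment, so nothing breaks; and the higher-order terms in the expansion should be controlled using the divided derivatives $g^{(k)}/k!$, which do lie in $\Z_p[x]$, so their contributions are indeed divisible by $p^{2(t+1+n)}\ge p^{2t+2+n}$ as you claim. With those glosses the induction closes, the sequence is Cauchy, and continuity of $g$ gives the root; the proof is complete.
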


\begin{prop}\label{prop311}
	Let $ m, a,b,c,n $ be positive integers and $ p $ be prime.
\noindent

\noindent
\begin{enumerate}[leftmargin=*,align=left,label={\rm(\roman*).}]
\item Assume $\gcd(a,b,c)=1 $. If $ p\in P(2^{1-\delta}(m-2)) $, then $8(m-2)n+(m-4)^{2}(a+b+c) \mathop{\rightarrow}\limits_{\mathbb{Z}_{p}} \varphi_{m,(a,b,c)}$; If $ \ord_{2}(m)\ge 2 $ and $ n\equiv  \lfloor2/\small \ord_{2}(m)\rfloor(a+b+c) \pmod{8} $, then $8(m-2)n+(m-4)^{2}(a+b+c) \mathop{\rightarrow}\limits_{\mathbb{Z}_{2}}\varphi_{m,(a,b,c)}$.
\textsc{}\item If $ p\notin P(2(m-2))  $ and $ p\nmid abc $, then  $n \mathop{\rightarrow}\limits_{\mathbb{Z}_{p}} \varphi_{m,(a,b,c)}$.
\item If $p\notin P(2(m-2))$, $ p\mid c $ but $ p\nmid abn $, then $n \mathop{\rightarrow}\limits_{\mathbb{Z}_{p}} \varphi_{m,(a,b,c)}$.
\item If $p\notin P(2(m-2)) $, $ p\mid c $, $ p\mid n $, but $ p\nmid ab $, and if $ p\notin\mathbb{P}(-4ab) $,  then $n \mathop{\rightarrow}\limits_{\mathbb{Z}_{p}} \varphi_{m,(a,b,c)}$.
\end{enumerate}
\end{prop}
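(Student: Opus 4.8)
The plan is to handle all four assertions through the completed-square shape $\varphi_{m,(a,b,c)}(x)=\sum_{i}a_i y_i^2$ with $y_i:=2(m-2)x_i-(m-4)$, which has integer coefficients, and to apply the two $p$-adic solvability criteria of Lemma \ref{lem:congeq_padic} and Lemma \ref{lem:hensel}. The decisive split is between the primes $p\mid 2(m-2)$, treated in (i), and the primes $p\nmid 2(m-2)$, treated in (ii)--(iv).

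For (ii)--(iv) I would first note that when $p\nmid 2(m-2)$ the affine map $x_i\mapsto y_i=2(m-2)x_i-(m-4)$ is a bijection of $\mathbb{Z}_p$ onto itself, so over $\mathbb{Z}_p$ the polynomial $\varphi$ represents $n$ exactly when the diagonal form $Q(y):=ay_1^2+by_2^2+cy_3^2$ does; each part then becomes a standard statement about $Q$ at the odd prime $p$. In (ii), $p\nmid abc$ makes $Q$ unimodular of rank $3$; since a nondegenerate ternary form over $\mathbb{F}_p$ represents every residue class, I may choose a representation of $n$ modulo $p$ with nonvanishing gradient and lift it by Lemma \ref{lem:hensel}. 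In (iii), $p\mid c$ while $p\nmid abn$ reduces matters to the unimodular binary form $ay_1^2+by_2^2$; a nondegenerate binary form over $\mathbb{F}_p$ represents every nonzero class, and as $n$ is a unit the resulting solution is nonsingular, so $n$ is represented with $y_3=0$. In (iv), the hypotheses $p\mid c$, $p\mid n$, $p\nmid ab$ and $p\notin\mathbb{P}(-4ab)$ give $(-ab/p)=1$, so $ay_1^2+by_2^2$ is isotropic modulo $p$ and hence, being unimodular over $\mathbb{Z}_p$ with $p$ odd, isometric to a hyperbolic plane; such a form represents every element of $\mathbb{Z}_p$, in particular $n$ despite $p\mid n$, again with $y_3=0$. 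I would emphasize that the condition $p\notin\mathbb{P}(-4ab)$ is indispensable: an anisotropic unimodular binary form over $\mathbb{Z}_p$ represents only elements of even $p$-valuation.

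For (i), where $p\mid 2(m-2)$ and the substitution is no longer onto $\mathbb{Z}_p$, I would work with $\varphi$ directly and use $\gcd(a,b,c)=1$. Writing $N:=8(m-2)n+(m-4)^2(a+b+c)$ and setting $x_j=0$ for $j\neq i$, a short computation gives the identity
\[
\varphi_{m,(a,b,c)}(x)-N=4(m-2)\bigl(a_i x_i((m-2)x_i-(m-4))-2n\bigr),
\]
so that $\varphi(x)=N$ reduces to the one-variable equation $G_i(x_i):=a_i x_i((m-2)x_i-(m-4))-2n=0$. For an odd prime $p\mid m-2$ I choose $i$ with $p\nmid a_i$ (available since $\gcd(a,b,c)=1$); modulo $p$ one has $G_i(x_i)\equiv 2(a_ix_i-n)$ and $G_i'(x_i)\equiv 2a_i\not\equiv 0$, so the root $x_i\equiv n a_i^{-1}$ is nonsingular and Lemma \ref{lem:hensel} produces a $\mathbb{Z}_p$-solution. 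The same reduction disposes of $p=2$ when $\ord_2(m)\le 1$: for $m$ odd the derivative $G_i'$ is already a $2$-adic unit, while for $m\equiv 2\pmod 4$ the polynomial $G_i/2$ has odd derivative, so Hensel applies with no congruence imposed on $n$.

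The genuinely delicate case, and the one I expect to be the main obstacle, is $p=2$ with $\ord_2(m)\ge 2$, which is the second assertion of (i). Now $v_2(2(m-2))=2$ forces $y_i\equiv 0\pmod 4$ for every admissible $x_i$, so $\varphi$ takes values in a single residue class modulo a high power of $2$ and the reduced equation becomes solvable only after a congruence on $n$ modulo $8$. The required bookkeeping compares $v_2(m-4)$---which is $2$ when $8\mid m$ but at least $3$ when $m\equiv 4\pmod 8$---with $v_2\bigl(8(m-2)n\bigr)$, and shows that $n\equiv \lfloor 2/\ord_2(m)\rfloor(a+b+c)\pmod 8$ is precisely the condition making $N$ a value of $\varphi$ modulo the relevant power of $2$; this splits into $\ord_2(m)=2$, giving $n\equiv a+b+c$, and $\ord_2(m)\ge 3$, giving $n\equiv 0$. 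A concluding application of Lemma \ref{lem:hensel} then lifts the solution, completing the proof.
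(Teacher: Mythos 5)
Your handling of (ii)--(iv) is correct and follows the same reduction as the paper: for $p\nmid 2(m-2)$ the substitution $x\mapsto 2(m-2)x-(m-4)$ is a bijection of $\Z_p$, so everything reduces to the diagonal form $ay_1^2+by_2^2+cy_3^2$. Where the paper at this point simply cites Dickson's congruence lemmas together with Lemma \ref{lem:congeq_padic}, you prove the three local statements directly (universality of nondegenerate ternary forms over $\F_p$, universality of the hyperbolic plane when $(-ab/p)=1$, plus Lemma \ref{lem:hensel}); that is a sound and more self-contained substitute. Likewise, your one-variable Hensel argument for (i) at odd primes $p\mid m-2$, and at $p=2$ when $\ord_2(m)\le 1$, is correct and is essentially the paper's argument.

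The genuine gap is exactly the case you yourself flag as delicate: $p=2$ with $\ord_2(m)\ge 2$. There you never exhibit a solution modulo $8$ nor verify the derivative hypotheses of Lemma \ref{lem:hensel}; you only assert that ``bookkeeping'' shows the congruence on $n$ is ``precisely'' the solvability condition. Two things go wrong. First, that assertion is false: when $\ord_2(m)=2$ one computes that $p_m(x)\pmod 8$ takes exactly the values $0,1,4$ (value $1$ at odd $x$, values $0,4$ at even $x$), so, for instance, $n\equiv a\pmod 8$ is also represented modulo $8$; only the sufficiency of $n\equiv a+b+c\pmod 8$ is needed, and that is precisely what you leave unproven. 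Second, and more seriously, the one-variable reduction you set up at the start of (i) cannot prove the case $\ord_2(m)=2$: with $x_j=0$ for $j\neq i$, the value of $\triangle_{m,(a,b,c)}$ modulo $8$ lies in $\{0,\,a_i,\,4a_i\}$, which need not contain $a+b+c$; e.g.\ $a=b=c=1$ and $n\equiv 3\pmod 8$ satisfies the hypothesis, yet $3\notin\{0,1,4\}$. The proof must use all three variables: the paper takes $(x,y,z)=(1,1,1)$ (with, say, $2\nmid a$), for which $\widetilde F(1,1,1)=a+b+c-n\equiv 0\pmod 8$ --- this is exactly where the hypothesis $n\equiv a+b+c\pmod 8$ enters --- and $\widetilde F_x(1,1,1)=2m'a\equiv 2\pmod 4$ (writing $m=4m'$ with $m'$ odd), so Lemma \ref{lem:hensel} applies with $t=1$; for $\ord_2(m)\ge 3$ the point $(0,0,0)$ works similarly, since $\widetilde F(0,0,0)=-n\equiv 0\pmod 8$ and $\widetilde F_x(0,0,0)\equiv 2\pmod 4$. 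Without identifying such points, your proof of the second assertion of (i) is incomplete.
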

\begin{re}\label{re4}
	For $ p\in P_{m}(a,b,c) $, if $ p\nmid n $, then $ n \mathop{\rightarrow}\limits_{\mathbb{Z}_{p}} \varphi_{m,(a,b,c)} $ by Proposition \ref{prop311} {\rm (ii)} and {\rm (iii)}.
\end{re}

\begin{proof}
\noindent

\noindent
	{\rm (i).} 	Let $ p\in P(2(m-2)) $. Since $ \gcd(a,b,c)=1 $, we may assume without loss of generality that $ p\nmid a $. We split into cases based on $\ord_2(m)$. 

We first consider the case $\ord_{2}(m)=0$. Define the polynomial 
\[ 	F(x,y,z):=2\triangle_{m,(a,b,c)}(x,y,z)-2n \]
	in $ \mathbb{Z}[x,y,z] $ and then $ F_{x}(x,y,z)=2a(m-2)x-a(m-4) $. Also, $ \gcd(m-2,m-4)=1 $ and $ 2\nmid (m-2)(m-4) $. Take
	\begin{equation*}
	(x_{0},y_{0},z_{0})= 
	\begin{cases}
	(1,0,0) &  \mbox{if $ p=2 $},\\
	(-2n(a(m-4))^{-1},0,0) &  \mbox{if $ p\mid P_{m-2}$,}
	\end{cases}	
	\end{equation*}
where the inverse is taken in $\Z_p$. One can compute $ F(x_{0},y_{0},z_{0})\equiv 0\pmod{p} $ but $ F_{x}(x_{0},y_{0},z_{0})\equiv -a(m-4)\not\equiv 0 \pmod{p} $. Hence $ F(x,y,z)=0 $ is solvable in $ \mathbb{Z}_{p} $ by Lemma \ref{lem:hensel}, and thus there also exists a solution over $\Z_p$ to the equation $ \triangle_{m,(a,b,c)}(x,y,z)=n $.
	
We next assume that $\ord_2(m)\ge1$. In this case, we define the polynomial 
	\[ \widetilde{F}(x,y,z):=\triangle_{m,(a,b,c)}(x,y,z)-n \] 
in $ \mathbb{Z}[x,y,z] $. Then $ 	\widetilde{F}_{x}(x,y,z)=a(2^{t}m^{\prime}-2)x-a(2^{t-1}m^{\prime}-2) $, where $m'=m/2^{t}$ with $t\geq 1$ and $2\nmid m'$. Take
	\begin{equation*}
	(x_{0},y_{0},z_{0})= 
	\begin{cases}
	(1,0,0) &  \text{if $ t=1 $, $ p=2 $ and $2\nmid n$},\\
	(0,0,0) &  \text{if $ t=1 $, $ p=2 $ and $2\mid n$},\\
	(-n(a(2^{t-1}m^{\prime}-2))^{-1},0,0) &  \text{if $ p\mid P_{m-2}$.}
	\end{cases}	
	\end{equation*}
	One can see that $ \widetilde{F}(x_{0},y_{0},z_{0})\equiv 0\pmod{p} $ while $\widetilde{F}_{x}(x_{0},y_{0},z_{0})\equiv -2a(2^{t-2}m^{\prime}-1)\not\equiv 0 \pmod{p}  $.  Hence $ \triangle_{m,(a,b,c)}(x,y,z)=n $ is solvable in $ \mathbb{Z}_{p} $ for $ p\mid P_{m-2} $ and $ p=2 $, $ \ord_{2}(m)=1 $ by Lemma \ref{lem:hensel}.
	
We finally consider the case $\ord_2(m)\geq 2$ and $ p=2 $. If $ \ord_{2}(m)=2 $ and $ n\equiv a+b+c\pmod{8} $, then one can put $ (x_{1},y_{1},z_{1})=(1,1,1) $ and check that
	\begin{align*}
	\widetilde{F}(x_{1},y_{1},z_{1})&=(a+b+c)(2m^{\prime}-1)-2(m^{\prime}-1)(a+b+c)-n\\
	&=a+b+c-n\equiv 0 \pmod{8},
	\end{align*}
while $ \widetilde{F}_{x}(x_{1},y_{1},z_{1})=2^{t-1}m^{\prime}a=2m^{\prime}a\equiv 2\pmod{4} $.  Hence $ \triangle_{m,(a,b,c)}(x,y,z)=n $ is solvable in $ \mathbb{Z}_{2} $ by Lemma \ref{lem:hensel} (taking $t=1$). If $ \ord_{2}(m)>2 $ and $ n\equiv 0\pmod{8} $, then $ 2\nmid 2^{t-2}m^{\prime}-1 $. Put $ (x_{2},y_{2},z_{2})=(0,0,0) $ in this case. One can see that $\widetilde{F}(x_{2},y_{2},z_{2})=-n\equiv 0 \pmod{8}$ while $ \widetilde{F}_{x}(x_{2},y_{2},z_{2})=-2a(2^{t-2}m^{\prime}-1)\equiv 2\pmod{4} $.  Hence $ \triangle_{m,(a,b,c)}(x,y,z)=n $ is solvable in $ \mathbb{Z}_{2} $ by Lemma \ref{lem:hensel}.
\vspace{.05in}
	
\noindent
	{\rm (ii)--(iv).} Assume $ p\notin P(2(m-2)) $. Since the linear map $x\mapsto 2(m-2)x-(m-4)$ is a bijection in $\Z_p$, the statements {\rm (ii)-(iv)} follow immediately from Dickson's results \cite[Lemma 3-5, p.\hskip0.1cm 107]{dickson_modern_1939} and Lemma \ref{lem:congeq_padic}.
\end{proof}

For given positive integers $ a,b $ and $ c $ with $ \gcd(a,b,c)=1 $, we may assume $ G_{m}(a,b,c)=B_{m}(a,b,c) $ by Lemma \ref{lem:wt_mgonalforms} and $ a\le b\le c $ by Remark \ref{re:wt_mgonalforms}. Then $8(m-2)n+(m-4)^{2}(a+b+c) \mathop{\rightarrow}\limits_{\mathbb{Z}_{p}} \varphi_{m,(a,b,c)}$ for $ p\in P(2^{1-\delta}(m-2)) $ and $ p\nmid abc $ by Proposition \ref{prop311} {\rm (i)} and {\rm (ii)}. For $ p\in G_{m}(a,b,c)=B_{m}(a,b,c) $, $8(m-2)n+(m-4)^{2}(a+b+c) \mathop{\rightarrow}\limits_{\mathbb{Z}_{p}} \varphi_{m,(a,b,c)}$ by Remark \ref{re:split}. For $ p\mid abc $, without loss of generality, let $ p\mid a$. If $ (-4bc/p)=1 $, then $8(m-2)n+(m-4)^{2}(a+b+c) \mathop{\rightarrow}\limits_{\mathbb{Z}_{p}} \varphi_{m,(a,b,c)}$ by Proposition \ref{prop311} {\rm (iii)} if $ p\nmid 8(m-2)n+(m-4)^{2}(a+b+c) $ and by Proposition \ref{prop311} {\rm (iv)} if $ p\mid 8(m-2)n+(m-4)^{2}(a+b+c) $. Therefore, to check whether $ 8(m-2)n+(m-4)^{2}(a+b+c) $ is locally represented by $ \varphi_{m,(a,b,c)} $ or not, it is sufficient to consider the local representation over $ \mathbb{Z}_{p} $ for $ p\in P_{m}(a,b,c)\cup \{2\}^{\delta}$. In other words, to show Theorem \ref{thm:noregularmgonalforms}, it is enough to consider the case of $ G_{m}(a,b,c)=B_{m}(a,b,c) $.

\section{Bounding the coefficients $ a $, $ b $ and $ c $}\label{sec:bound_abc}

For each fixed integer $ m>3 $, we always assume that $ q_{0} $ is the smallest prime in the set $\mathbb{P}(-4ab)\backslash (P(m-2)\cup P_{m}(c,ab))$ (the existence follows from Lemma \ref{lem:tool2} {\rm (i)} below) and denote by $ \{q_{i}\}_{ab,m} $ ($ i=1,2,\ldots $) the sequence of all primes in $\mathbb{P}(-4ab)\backslash P(q_{0}(m-2)) $ in ascending order for brevity. The following useful proposition may be found in \cite[Lemma 3.5]{kim_2-universal_1999}.

\begin{prop}[B. M. Kim, M.-H. Kim, and B.-K. Oh \cite{kim_2-universal_1999}]\label{prop:tool1}
	Let $ T $ be a finite set of primes. Set $ P:=\prod_{p\in T}p $ and let $ \ell $ be an integer relatively prime to $P$. Then for any integer $u$, the number of integers in the set
	\begin{align*}
	\{u,\ell+u,\cdots,\ell(n-1)+u\}
	\end{align*}
	that are relatively prime to $P$ is at least
	$n\phi(P)/P-2^{\omega(P)}+1.$
\end{prop}

We next give an upper bound on the product $ab$ for a regular ternary $m$-gonal form by using Proposition \ref{prop:tool1}. 
\begin{lem}\label{lem:bound_ab}
	Let $ a\le b\le c$ be positive integers for which $ \gcd(a,b,c)=1 $ and $G_m(a,b,c)=B_{m}(a,b,c)$. If $ \triangle_{m,(a,b,c)} $ is regular, then 
	\begin{align*}
	a \le 8\rho(P_{abc}) \hskip0.5cm \mbox{and} \hskip 0.5cm b \le 64\cdot 11 P_{abc}\rho(P_{abc})/\phi(P_{abc}).
	\end{align*}
	Hence 
	$ab<C_{2}P_{abc}\rho(P_{abc})^{2}/\phi(P_{abc})$,
where $ C_{2}:=2^{9}\cdot 11 $ is a constant.
\end{lem}
\begin{re}\label{rem:relprime}
At first glance, it is not obvious whether the right-hand sides of the inequalities in Lemma \ref{lem:bound_ab} grow faster or slower than the left-hand sides. However, since $\frac{n}{\phi(n)}=O(n^{\varepsilon})$, the right-hand side grows like $P_{abc}^{\varepsilon}\ll (abc)^{\varepsilon}$. Thus if $c$ may be bounded as a function of $a$ and $b$ slower than $(ab)^{1/\varepsilon}$, then such a bound may be combined with Lemma \ref{lem:bound_ab} to obtain a restriction on the possible choices of $a$, $b$, and $c$ for which the form $\triangle_{m,(a,b,c)}$ may be regular.
\end{re}
\begin{proof}	
	If $ \ord_{2}(m)<2 $, we take $ w_{0}=1 $; if $ \ord_{2}(m)\ge 2 $, we choose $w_0$ such that $0< w_{0}\le 8 $ and $w_{0}\equiv  \lfloor2/\small \ord_{2}(m)\rfloor(a+b+c)  \pmod{8}$. Clearly, for every $v\in\N$, if $ \ord_{2}(m)\ge 2 $, then the integer $n=8^{\delta}v+w_{0}$ is congruent to $\lfloor2/\small \ord_{2}(m)\rfloor(a+b+c)  $ modulo $ 8 $. 

We next construct a pair of integers $N_0$ and $\bar{N}_{0}$ which are locally represented by $\varphi_{m,(a,b,c)}$ and then use the regularity to obtain upper bounds for $a$ and $b$. We do so in a series of steps (a)--(c) below, first constructing them in (a), showing that they are locally represented in (b), and finally obtaining the bounds for $a$ and $b$ in (c).

	\noindent (a) We first construct the integers $ N_{0} $ and $ \bar{N}_{0} $ such that $ \gcd(N_{0},P_{abc})=1=\gcd(\bar{N}_{0},P_{abc})$, $ N_{0}\equiv (m-4)^{2}(a+b+c)\pmod{8(m-2)}$ and there exists some positive integer $ \bar{n_{0}} $ such that $ \bar{N_{0}}=8(m-2)\bar{n_{0}}+(m-4)^{2}(a+b+c) $ and $ \bar{n}_{0}\mathop{\not\rightarrow}\limits_{\mathbb{Z}}\triangle_{m,a}  $.
	
	Write $ u=8(m-2)w_{0}+(m-4)^{2}(a+b+c) $. Since $ \gcd(2(m-2),P_{abc})=1 $, when $ v= \lfloor\rho(P_{abc})\rfloor $, $ v\phi(P_{abc})/P_{abc}-2^{\omega(P_{abc})}+1>0 $. By Proposition \ref{prop:tool1}, there exists at least one integer $ 0\le v_{0}\le v-1 $ such that $N_{0}:=8^{\delta+1}(m-2)v_{0}+u$ is relatively prime to $P_{abc}$ and $ N_{0}\equiv u\equiv (m-4)^{2}(a+b+c) \pmod{8(m-2)} $. 
	
We now construct $ \bar{N}_{0} $. We claim that for $ v\ge 2 $, there are at most $ 2\sqrt{2\cdot 8^{\delta}(v+1)+1/4} $ integers between $1$ and $v$ represented by $ (\triangle_{m,a}(x)-w_{0})/8^{\delta} $. Indeed, solving the inequality
	\begin{align*}
	a\hskip 0.1cm \dfrac{(m-2)x^{2}-(m-4)x}{2\cdot 8^{\delta}}-\dfrac{w_{0}}{8^{\delta}}\le v
	\end{align*}
	for $ x $, we see that $ x_{-}\le x\le x_{+} $, where
	\begin{align*}
	x_{\pm}=\pm\sqrt{\dfrac{2(8^{\delta}v+w_{0})}{a(m-2)}+\left(\dfrac{m-4}{2(m-2)}\right)^{2}}+\dfrac{m-4}{2(m-2)}.
	\end{align*}
	Since $ a\ge 1 $, $ m>3 $ and $ w_{0}\le 8^{\delta} $, we have
	\begin{align*}
	2\sqrt{\dfrac{2(8^{\delta}v+w_{0})}{a(m-2)}+\left(\dfrac{m-4}{2(m-2)}\right)^{2}}<2\sqrt{2\cdot 8^{\delta}(v+1)+1/4},
	\end{align*}
yielding the claim. Taking $v=\lfloor 11\cdot 8^{\delta} P_{abc}\rho(P_{abc})/\phi(P_{abc})\rfloor$, one can compute
	\begin{align*}
	v\phi(P_{abc})/P_{abc}-2^{\omega(P_{abc})}+1&>2\sqrt{2\cdot 8^{\delta}(v+1)+1/4}.
	\end{align*} 
	Since $ \gcd(2(m-2),P_{abc})=1 $, Proposition \ref{prop:tool1} implies that there exists an integer $ 0\le \bar{v}_{0}\le v-1 $ for which $ \bar{N}_{0}:=8^{\delta+1}(m-2)\bar{v}_{0}+u $ is relatively prime to $P_{abc}$. Furthermore, $ \bar{v}_{0}$ is not represented by $ (\triangle_{m,a}(x)-w_{0})/8^{\delta} $;
	namely, $ \bar{n_{0}}:=8^{\delta}\bar{v}_{0}+w_{0} $ is not represented by $ \triangle_{m,a}(x) $ as desired.

	\noindent (b) For each prime $ p $, we have $ N_{0}\mathop{\rightarrow}\limits_{\mathbb {Z}_{p}}\varphi_{m,(a,b,c)} $ and $ \bar{N}_{0}\mathop{\rightarrow}\limits_{\mathbb {Z}_{p}}\varphi_{m,(a,b,c)} $.

	By the construction in (a), we see that $ N_{0} $ can be rewritten as $8(m-2)n_{0}+(m-4)^{2}(a+b+c)$,
	where $ n_{0}=8^{\delta}v_{0}+w_{0} $ is a positive integer.
	 Clearly, when $ \ord_{2}(m)\le 1 $, by the first part of Proposition \ref{prop311} {\rm (i)}, $N_{0}\mathop{\rightarrow}\limits_{\mathbb{Z}_{2}}\varphi_{m,(a,b,c)}$; when  $ \ord_{2}(m)\ge2 $, $ n_{0}\equiv \lfloor2/\small \ord_{2}(m)\rfloor(a+b+c) \pmod{8}  $ and by the second part of Proposition \ref{prop311} {\rm (i)} we conclude that $N_{0}\mathop{\rightarrow}\limits_{\mathbb{Z}_{2}}\varphi_{m,(a,b,c)}$.
	 For $ p\in P_{m}(a,b,c) $, since $ \gcd(N_{0},P_{abc})=1 $, by Remark \ref{re4} we further see that
	 $N_{0}\mathop{\rightarrow}\limits_{\mathbb {Z}_{p}}\varphi_{m,(a,b,c)}$. Hence $N_{0}\mathop{\rightarrow}\limits_{\mathbb{Z}_{p}}\varphi_{m,(a,b,c)}$ for each prime $ p $. Note that $ \bar{N}_{0} $ can be also rewritten as $8(m-2)\bar{n}_{0}+(m-4)^{2}(a+b+c)$, where $ \bar{n}_{0}=8^{\delta}\bar{v}_{0}+w_{0}>0 $. Repeating the above argument, we deduce that $\bar{N}_{0}\mathop{\rightarrow}\limits_{\mathbb{Z}_{p}}\varphi_{m,(a,b,c)}$ for each prime $ p $.
	 
	\noindent (c) We finally use $N_0$ and $\bar{N}_{0}$ to bound $ a $ and $ b $.
	 
	From (b), we see that $ N_{0} $ is locally represented by $ \varphi_{m,(a,b,c)}$. Since $ \varphi_{m,(a,b,c)} $ is regular,  $N_{0}$ is globally represented by $ \varphi_{m,(a,b,c)}$. It follows that $ n_{0}\mathop{\rightarrow}\limits_{\mathbb{Z}}\triangle_{m,(a,b,c)}$ from Remark \ref{re1}. Hence 
	\[ a\le n_{0}= 8^{\delta}v_{0}+w_{0}\le 8^{\delta}(\rho(P_{abc})-1)+8^{\delta}\le 8\rho(P_{abc}). \]
	From the construction in (a), we see that $ \bar{N}_{0} $ can be written as $8(m-2)\bar{n}_{0}+(m-4)^{2}(a+b+c)$, where $ \bar{n}_{0}=8^{\delta}\bar{v}_{0}+w_{0} $ is not represented by $ \triangle_{m,a} $. By (b), $\bar{N}_{0}$ is also locally represented by $ \varphi_{m,(a,b,c)}$ and so also globally represented from the regularity of $ \varphi_{m,(a,b,c)} $. This implies $ \bar{n}_{0}\mathop{\rightarrow}\limits_{\mathbb{Z}}\triangle_{m,(a,b,c)} $. 
Since $ \bar{n}_{0}\mathop{\not\rightarrow}\limits_{\mathbb{Z}}\triangle_{m,a} $, it must be the case that
	\begin{align*}
	b\le \bar{n}_{0}= 8^{\delta}\bar{v}_{0}+w_{0}&<8^{\delta}(11\cdot 8^{\delta}P_{abc}\rho(P_{abc})/\phi(P_{abc})-1)+8^{\delta} \\ 
&\leq 64\cdot 11P_{abc}\rho(P_{abc})/ \phi(P_{abc}).\qedhere
	\end{align*} 
\end{proof}

\begin{lem}\label{lem:tool2}
	Let $ a,b,c $ be positive integers. Set $ C_{1}:=2^{9/5}C_{0}^{6/5}$, where $ C_{0} $ is the constant defined as in Lemma \ref{lem:newprimeanalytic}. Fix an integer $ m\geq 4 $. Then the following hold.
	\begin{enumerate}[leftmargin=*,align=left,label={\rm(\roman*).}]
	\item We have $ q_{0}<4^{2/3}C_{0}(P_{m-2}P_{c})^{1/6}(ab)^{2/3}$.
	
  	\item We have  
     $q_{1}<C_{1}P_{m-2}^{1/6}(q_{0}^{3}K(a,b,c))^{1/5}(ab)^{4/5}.$
\item
	Assume that $q_{i_{0}+1} $ is the least prime in $ \{q_{i} \}_{ab,m} $ greater than 
	\[ 
	C_{1}P_{m-2}^{1/6}(q_{0}^{3}K(a,b,c))^{1/5}(ab)^{4/5}. \]
	 Then the inequality 
	\begin{equation}\label{eq34}
	\begin{aligned}
	K(a,b,c)q_{0}^{2}q_{i+1}<(m-3)q_{1}q_{2}\cdots q_{i}
	\end{aligned}
	\end{equation}
	 holds for $ i\ge i_{0} $.
\end{enumerate}
\end{lem}
\begin{proof}
	Let $ t $ be the least positive integer such that $ P_{m-2}/t $ is prime to $ 4ab $. Then $ P_{m-2}=tu $, where $ u\ge 1 $ and $ \gcd(2ab,u)=1 $.
	
	{\rm (i).} Take $ D=-4ab $ and $ M=P_{c}u$ in Lemma \ref{lem:newprimeanalytic}.  We see that there exists some prime $ q\in \mathbb{P}(-4ab)\backslash (P(m-2)\cup P_{m}(c,ab))  $ such that  
	\begin{align*}
	q_{0}\le q<C_{0}(4ab)^{2/3}(P_{c}u)^{1/6}\le 4^{2/3}C_{0}(P_{m-2}P_{c})^{1/6}(ab)^{2/3}.
	\end{align*}
	
	{\rm (ii).} Taking $ D=-4ab $ and $ M=q_{0}u$ in Lemma \ref{lem:newprimeanalytic}, we see that there exists some prime $ q\in \mathbb{P}(-4ab)\backslash P(q_{0}(m-2)) $ such that  
	\begin{align*}
	q<C_{0}(4ab)^{2/3}(q_{0}u)^{1/6}&<4^{2/3}C_{0}(q_{0}P_{m-2})^{1/6}(ab)^{2/3}\\
	&<C_{1}P_{m-2}^{1/6}(q_{0}^{3}K(a,b,c))^{1/5}(ab)^{4/5}.
	\end{align*}

	{\rm (iii).} Now suppose that $ q_{1}q_{2}\cdots q_{j}\le K(a,b,c)q_{0}^{2}q_{j+1}/(m-3)$ for some $ j\ge i_{0} $.  Taking $ D=-4ab $ and $ M=q_{1}q_{2}\cdots q_{j}q_{0}u $ in Lemma \ref{lem:newprimeanalytic}, one deduces that there exists some prime $ q^{\prime}\in \mathbb{P}(-4ab)\backslash P(q_{0}(m-2)) $ different from $ q_{1},q_{2},\cdots,q_{j} $ such that
	\begin{align*}
	q_{j+1}\le q^{\prime} &<C_{0}(4ab)^{2/3}(q_{1}q_{2}\cdots q_{j}q_{0}u)^{1/6}\\
	&<C_{0}4^{2/3} (q_{0}^{3}P_{m-2}K(a,b,c)/(m-3))^{1/6}(ab)^{2/3}q_{j+1}^{1/6}\\
	&\le C_{0}4^{2/3}(2q_{0}^{3}K(a,b,c))^{1/6}(ab)^{2/3}q_{j+1}^{1/6},
	\end{align*} 
where in the last line we bounded $P_{m-2}/(m-3)\leq (m-2)/(m-3)\leq 2$ for $m\geq 4$. It follows that
	\begin{align*}
	q_{j+1}&<C_{0}^{6/5}4^{4/5}  (2q_{0}^{3}K(a,b,c))^{1/5}(ab)^{4/5}\\
	&\leq C_{1}P_{m-2}^{1/6}(q_{0}^{3}K(a,b,c))^{1/5}(ab)^{4/5}<q_{i_{0}+1},  
	\end{align*}
which contradicts the assumption that $j\ge i_{0}$.
\end{proof}

\begin{lem}\label{lem:Ni}
	Let $ a,b,c $ be positive integers. Then for each $ q_{i} $ in the prime sequence $ \{q_{i}\}_{ab,m} $ ($ i=1,2,\cdots $), there exists some $ N_{i} $ such that $ \ord_{q_{i}}N_{i}=1 $,  $ N_{i}\equiv (m-4)^{2}(a+b) \pmod{8(m-2)}$, $ N_{i}\equiv 8(m-2)c+q_{0} \pmod{q_{0}^{2}} $ and $ \gcd(N_{i}/q_{i},P_{c})=1 $.
	
	Hence there exists a positive integer $ n_{i} $ such that $ N_{i}=8(m-2)n_{i}+(m-4)^{2}(a+b) $. Also, $ n_{i}\le K(a,b,c)q_{0}^{2}q_{i} $. If moreover $ \ord_{2}(m)\ge 2 $, then $ n_{i}\equiv \lfloor2/\small \ord_{2}(m)\rfloor (a+b+c) \pmod{8}$.
\end{lem}
\begin{proof}
	Write $ P_{ab}=P_{ab}^{\prime}s $, where $ \gcd(s, P_{ab}^{\prime})=1 $. For $ i=1,2,\cdots $, observe that 
$\gcd(sq_{i},2(m-2)q_{0})=1$ from  $2\nmid P_{ab}$, and $ \gcd(q_{i},2(m-2)q_{0}ab)=1 $. Also, $\gcd(q_{0},8(m-2))=1 $ and moreover, when $ \ord_{2}(m)\ge 2 $, $
	\gcd(8(m-2),128)=16\mid 8(m-2) \lfloor 2/\ord_{2}(m)\rfloor(a+b+c).$
	By the Chinese remainder theorem, the system of congruences 
	\begin{equation*}
	\left\{
	\begin{aligned}
	sq_{i}u&\equiv 8(m-2)c+q_{0} \pmod{q_{0}^{2}}  \\
	sq_{i}u&\equiv (m-4)^{2}(a+b) \pmod{8(m-2)} \\
	sq_{i}u&\equiv 8(m-2)\lfloor 2/\ord_{2}(m)\rfloor(a+b+c)+(m-4)^{2}(a+b) \pmod{128^{\delta}} \\
	\end{aligned}
	\right.
	\end{equation*}
	is solvable (in terms of $ u $) for each $ m>3 $. Then we take its solution, say $ u_{i} $, in the range 
\[
 (m-4)^{2}(a+b)/(sq_{i})< u_{i}\le  8^{\delta+1}(m-2)q_{0}^{2}+(m-4)^{2}(a+b)/(sq_{i}).
\]
Since $8^{\delta+1}(m-2)q_{0}^{2}v+u_{i} $ is also a solution for any $ v\in\mathbb{N} $ (note that $ 128\mid 8^{\delta+1}(m-2) $ when $ \ord_{2}(m)\ge 2 $) and $ \gcd(2(m-2)q_{0}^{2},q_{i}P_{c}P_{ab}^{\prime})=1 $, choosing $ v=\lfloor\rho(q_{i}P_{c}P_{ab}^{\prime})\rfloor$, we see by Proposition \ref{prop:tool1} that there exists at least one integer $ 0\le v_{i} \le v-1 $ for which $ w_{i}:=8^{\delta+1}(m-2)q_{0}^{2}v_{i}+u_{i} $ satisfies $ \gcd(w_{i},q_{i}P_{c}P_{ab}^{\prime})=1 $. Take $ N_{i}:=sq_{i}w_{i} $. Since $ \gcd(sw_{i},q_{i})=1 $, $\ord_{q_{i}}N_{i}=1$. Also, since $ \gcd(P_{ab}w_{i}, P_{c})=1 $, it follows that $ \gcd(N_{i}/q_{i}, P_{c})=\gcd(sw_{i},P_{c})=1 $. Moreover,
	\begin{align}
\label{eqn:Nibound}
	(m-4)^{2}(a+b)< N_{i}&=sq_{i}(8^{\delta+1}(m-2)q_{0}^{2}v_{i}+u_{i})\\
\nonumber	&\le sq_{i}\Big(8^{\delta+1}(m-2)q_{0}^{2}(\rho(q_{i}P_{c}P_{ab}^{\prime})-1)+\\
\nonumber	&\hskip1.25cm 8^{\delta+1}(m-2)q_{0}^{2}+(m-4)^{2}(a+b)/(sq_{i})\Big)  \\
\nonumber	&\le sq_{i}8^{\delta+1}(m-2)q_{0}^{2}\rho(q_{i}P_{c}P_{ab}^{\prime})+(m-4)^{2}(a+b)  \\
\nonumber	&\le 24\cdot 8^{\delta}(m-2)q_{0}^{2}P_{ab}\rho(P_{abc})q_{i}+(m-4)^{2}(a+b),
	\end{align}
	as $ \rho(q_{i})\le 3 $, $ s\le P_{ab} $ and $ P_{ab}^{\prime}\mid P_{ab} $. 
	
	For the second part, from the last two congruences, we have 
\[
N_{i}=sq_{i}w_{i}=8(m-2)8^{\delta}t_{i}+(m-4)^{2}(a+b)
\]
 for some positive integer $ t_{i} $, as $ N_{i}>(m-4)^{2}(a+b) $ by construction. Take $ n_{i}:=8^{\delta}t_{i} $. Then $ n_{i}>0 $. Since $N_{i}\le  24\cdot 8^{\delta}(m-2)q_{0}^{2}P_{ab}\rho(P_{abc})q_{i}+(m-4)^{2}(a+b)$ by \eqref{eqn:Nibound}, we have 
	\begin{align*}
	n_{i}\le 3\cdot 8^{\delta}P_{ab}\rho(P_{abc})q_{0}^{2}q_{i}\le K(a,b,c)q_{0}^{2}q_{i}.
	\end{align*}
 When $ \ord_{2}(m)\ge 2 $, we also have 
	\[  N_{i} \equiv 8(m-2) \lfloor 2/\ord_{2}(m)\rfloor(a+b+c)+(m-4)^{2}(a+b)\pmod{128}, \]
	which implies $ n_{i}\equiv  \lfloor 2/\ord_{2}(m)\rfloor (a+b+c) \pmod{8}$.
\end{proof}

\begin{lem}\label{lem:Ni_localrepre}
	Let $ a,b,c$ be positive integers with $ a\le b\le c $ and $ \gcd(a,b,c)=1 $. Let $ N_{i} $ be the integers as defined in Lemma \ref{lem:Ni}, $ i=1,2,\cdots $. Then the following hold.  
	\begin{enumerate}[leftmargin=*,align=left,label={\rm(\roman*).}]
	\item  We have $ N_{i}\mathop{\not\rightarrow}\limits_{\mathbb{Z}}\varphi_{m,(a,b)}$; if $ N_{i}-8(m-2)c>0 $, then $ N_{i}-8(m-2)c\mathop{\not\rightarrow}\limits_{\mathbb{Z}}\varphi_{m,(a,b)}$.
	
	\item  If $ p\in P_{m}(a,bc)\cup P_{m}(b,ac)\cup \{2\}^{\delta}$, then $ N_{i}+(m-4)^{2}c\mathop{\rightarrow}\limits_{\mathbb{Z}_{p}}\varphi_{m,(a,b,c)} $.
	
	\item If $ p\in P_{m}(c,ab)$ and $q_i\nmid c$, then $ N_{i}+(m-4)^{2}c\mathop{\rightarrow}\limits_{\mathbb{Z}_{p}}\varphi_{m,(a,b,c)} $.
	
	\item  If $ q_i\nmid c$ and $a$, $b$, and $c$ satisfy $G_m(a,b,c)=B_{m}(a,b,c)$, then $ N_{i}+(m-4)^{2}c\mathop{\rightarrow}\limits_{\mathbb{Z}_{p}}\varphi_{m,(a,b,c)} $ for each prime $ p $.
\end{enumerate}
\end{lem}
\begin{proof}
Let $ N_{i}=sq_{i}w_{i} $ be as constructed in Lemma \ref{lem:Ni} for $ i=1,2,\cdots $.
	
	{\rm (i).} If $ N_{i} $ is represented by $ \varphi_{m,(a,b)} $, then the equation $ N_{i}=ax^{2}+by^{2} $ is solvable (in $\mathbb{Z}$) and so is $ 4aN_{i}=x^{2}+4aby^{2} $. Hence $ x^{2}+4aby^{2}\equiv 0\pmod{q_{i}} $. But $   (-4ab/q_{i})=-1 $, which implies that $ x\equiv y\equiv 0\pmod{q_{i}} $. So $q_{i}^{2}\mid  x^{2}+4aby^{2}=4aN_{i}$
	and hence $ q_{i}^{2}\mid N_{i} $, which contradicts $ \ord_{q_{i}}N_{i}=1 $. 
	
	If $ N_{i}-8(m-2)c>0 $, then since $ N_{i}\equiv 8(m-2)c+q_{0} \pmod{q_{0}^{2}} $, we conclude that $ \ord_{q_{0}}(N_{i}-8(m-2)c)=1 $. If $N_i-8(m-2)c$ is represented by $ \varphi_{m,(a,b)} $ over $\Z$, then since $(-4ab/q_0)=-1$, we again conclude by a similar argument that $q_0^2\mid N_{i}-8(m-2)c $, yielding a contradiction.

	{\rm (ii).} Let $ p\in P_{m}(a,bc)\cup P_{m}(b,ac)$ be given, from which we conclude that $p\mid P_{ab} $ but $ p\nmid c $. We again write $P_{ab}=P_{ab}'s$, with $\gcd(P_{ab}',s)=1$, so that either $ p\mid s $ or $ p\mid P_{ab}^{\prime} $. If $ p\mid s $, then $ p\nmid m-4 $ and it follows that $ \gcd(N_{i}+(m-4)^{2}c,p)=\gcd((m-4)c,p)=1 $ and so $ N_{i}+(m-4)^{2}c\mathop{\rightarrow}\limits_{\mathbb{Z}_{p}}\varphi_{m,(a,b,c)}$ by Remark \ref{re4}. On the other hand, if $ p\mid P_{ab}^{\prime} $, then $ p\mid m-4 $ and $p\nmid s$ and hence  
\[
\gcd(N_{i}+(m-4)^{2}c,p)=\gcd(N_{i},p)=\gcd(sq_iw_i,p)=\gcd(q_iw_i,p).
\]
We then note that $ p\neq q_{i} $ because $ \gcd(q_{i},ab)=1 $ and $ \gcd(w_{i},P_{ab}^{\prime})=1 $, from which we conclude that $ \gcd(q_iw_{i},p)=1 $. So $ N_{i}+(m-4)^{2}c\mathop{\rightarrow}\limits_{\mathbb{Z}_{p}}\varphi_{m,(a,b,c)} $ by Remark \ref{re4}.

	Now consider $ p=2 $ and assume without loss of generality that $ \ord_{2}(m)\ge 2 $, since otherwise this case is covered above. Observe that $ N_{i}+(m-4)^{2}c=8(m-2)n_{i}+(m-4)^{2}(a+b+c)  $ and $ n_{i}\equiv  \lfloor 2/\ord_{2}(m)\rfloor (a+b+c) \pmod{8}$, where $ n_{i} $ is constructed as in the proof of Lemma \ref{lem:Ni_localrepre}. By the second part of Proposition \ref{prop311} {\rm (i)}, we have $ N_{i}+(m-4)^{2}c\mathop{\rightarrow}\limits_{\mathbb{Z}_{2}}\varphi_{m,(a,b,c)} $.

	{\rm (iii).} Let $ p\in P_{m}(c,ab) $ with $q_i\nmid c$ be given. Then $ p\mid P_{c} $ but $ p\nmid  ab $. Since $p\mid c$, $s\mid P_{ab}$, $\gcd(w_i,P_c)=1$, and $q_i\nmid c$ by construction (and hence $q_i\neq p$), we have  
\[
\gcd(N_{i}+(m-4)^{2}c,p)=\gcd(N_{i},p)=\gcd(sw_iq_i,p)=1.
\]
Therefore $ N_{i}+(m-4)^{2}c\mathop{\rightarrow}\limits_{\mathbb{Z}_{p}}\varphi_{m,(a,b,c)}$ by Remark \ref{re4}.
	
	{\rm (iv).} The statement follows immediately from parts {\rm (ii)} and {\rm (iii)}.
\end{proof}

Recall from Remark \ref{rem:relprime} that one obtains a bound for $a$ and $b$ in a regular $\triangle_{m,(a,b,c)}$ which is non-trivial when $a$, $b$, and $c$ satisfy $G_m(a,b,c)=B_{m}(a,b,c)$. It was then explained that obtaining a bound for $c$ in terms of $a$ and $b$ would lead to a bound for the possible choices of $a$, $b$, and $c$ for which $\triangle_{m,(a,b,c)}$ is regular. Following Dickson's proofs of \cite[Theorem 5, 6]{dickson_ternary_1926}, we deduce such a bound for $c$.

\begin{lem}\label{lem:bound_c}
	Let $ a\le b\le c $ be positive integers for which $\gcd(a,b,c)=1$ and $G_m(a,b,c)=B_{m}(a,b,c)$. If $ \triangle_{m,(a,b,c)} $ is regular, then 
	\begin{align*}
	(m-3)c\le C_{3}P_{m-2}^{3/5}P_{c}^{1/2}K(a,b,c)^{6/5}(ab)^{38/15}
	\end{align*}
	with the constant $ C_{3}:=4^{26/15}C_{0}^{13/5}C_{1} $, where $C_{0}$ and $C_{1} $ are defined as in Lemma \ref{lem:newprimeanalytic} and \ref{lem:tool2}, respectively.
\end{lem}
\begin{proof}
	Consider the prime sequence $ \{q_{i}\}_{ab,m} $. By Lemma \ref{lem:tool2} (ii), we see that
	\begin{align*}
	q_{1}<q_{2}<\cdots<q_{i_{0}}<C_{1}P_{m-2}^{1/6}(q_{0}^{3}K(a,b,c))^{1/5}(ab)^{4/5}<q_{i_{0}+1}<\cdots,
	\end{align*}
	for some $ i_{0}\ge 1 $. For each $ i $, we take $ N_{i}=sq_{i}w_{i}=8(m-2)n_{i}+(m-4)^{2}(a+b) $ as constructed in Lemma \ref{lem:Ni}.
	If $ q_j\nmid c $ for some $ 1\le j\le i_{0} $, then  $N_{j}+(m-4)^{2}c\mathop{\rightarrow}\limits_{\mathbb{Z}_{p}}\varphi_{m,(a,b,c)}$ for every prime $ p $ by Lemma \ref{lem:Ni_localrepre} {\rm (iv)}. But then $ n_{j}\mathop{\rightarrow}\limits_{\mathbb{Z}_{p}}\triangle_{m,(a,b,c)} $ for each prime $ p $ and $ \triangle_{m,(a,b,c)} $ is regular so $ n_{j}\mathop{\rightarrow}\limits_{\mathbb{Z}}\triangle_{m,(a,b,c)} $. Hence $N_{j}+(m-4)^{2}c\mathop{\rightarrow}\limits_{\mathbb{Z}}\varphi_{m,(a,b,c)} $. Namely, 
\begin{equation}\label{eqn:Njz0}
	N_{j}=\varphi_{m,(a,b)}(x_{0},y_{0})+4c(m-2)z_{0}((m-2)z_{0}-(m-4))
\end{equation}
	for some $ x_{0},y_{0},z_{0}\in\mathbb{Z} $. Since $ N_{j}\mathop{\not\rightarrow}\limits_{\mathbb{Z}}\varphi_{m,(a,b)} $ by Lemma \ref{lem:Ni_localrepre} {\rm (i)}, it follows that $ z_{0}\neq 0$. Moreover, since \eqref{eqn:Njz0} is increasing as a function of $z_0$ for $z_0>0$ and decreasing for $z_0<0$, we have $
N_j-8(m-2)c\geq \varphi_{m,(a,b)}(x_0,y_0)$.
Since $N_j-8(m-2)c\equiv q_0\pmod{q_0^2}$ from Lemma \ref{lem:Ni}, we have $ N_{j}-8(m-2)c>0$ and so $ N_{j}-8(m-2)c\mathop{\not\rightarrow}\limits_{\mathbb{Z}}\varphi_{m,(a,b)}$ by Lemma \ref{lem:Ni_localrepre} {\rm (i)}, which in turn implies that $ z_{0}\neq 1$. Hence $ 	N_{j}\ge \varphi_{m,(a,b)}(x_{0},y_{0})+8c(m-2)(m-3) $; that is
	$n_{j}\ge \triangle_{m,(a,b)}(x_{0},y_{0})+c(m-3)$ and so, using the fact that $n_j\leq K(a,b,c)q_0^2q_j$ by Lemma \ref{lem:Ni} and $q_j\leq q_{i_0}$,
	\begin{align*}
	(m-3)c\le n_{j}\le K(a,b,c)q_{0}^{2}q_{j}&\le K(a,b,c)q_{0}^{2}q_{i_{0}}  \\
	&\le K(a,b,c) q_{0}^{2}C_{1}P_{m-2}^{1/6}(q_{0}^{3}K(a,b,c))^{1/5}(ab)^{4/5}.
\end{align*}
We then use Lemma \ref{lem:tool2} (i) to bound $q_0$, obtaining
\begin{align}
\label{eqn:boundc}
	(m-3)c	&<C_{1}P_{m-2}^{1/6}K(a,b,c)^{6/5} (ab)^{4/5}\left(4^{2/3}C_{0}(P_{m-2}P_{c})^{1/6}(ab)^{2/3}\right)^{13/5}\\
&< 4^{26/15}C_{0}^{13/5}C_{1}P_{m-2}^{3/5}P_{c}^{1/2}K(a,b,c)^{6/5}(ab)^{38/15}.\nonumber
	\end{align}
Hence if there exists some $j\leq i_0$ for which $q_j\nmid c$, then we have \eqref{eqn:boundc}, which implies the claim. 

On the other hand, if no such $j$ exists, then we have $ q_{1}q_{2}\cdots q_{i_{0}} \mid P_{c}$. We claim that for every $i\geq i_0$ we have $q_{1}q_{2}\cdots q_i\mid P_c$, leading to a contradiction because $c$ is finite. The case $i=i_0$ is assumed, and we proceed by induction. Suppose that $i\geq i_0$ and $q_1\cdots q_{i}\mid P_c$. If $q_{i+1}\nmid c$, then we again have $n_{i+1}\leq K(a,b,c)q_0q_{i+1}$ by Lemma \ref{lem:Ni} and repeating the above argument we obtain 
	\begin{align*}
	(m-3)q_{1}q_{2}\cdots q_{i}\le (m-3)P_{c} \le (m-3)c\le n_{i+1}\le K(a,b,c)q_{0}^{2}q_{i+1},
	\end{align*}
	which contradicts the inequality \eqref{eq34} in Lemma \ref{lem:tool2}. We conclude that $j$ must exist, and therefore \eqref{eqn:boundc} follows.
\end{proof}

\noindent\textbf{Proof of Theorem \ref{thm:noregularmgonalforms}.}  
First suppose that $\triangle_{m,(a,b,c)}$ is primitive and regular and $G_m(a,b,c)\neq B_{m}(a,b,c)$. By Lemma \ref{lem:wt_mgonalforms}, there exists another primitive regular form $\triangle_{m,(a',b',c')}$ with $G_m(a',b',c')=B_{m}(a^{\prime},b^{\prime},c^{\prime})$. It thus suffices to prove that there do not exist any primitive regular forms with $G_m(a,b,c)=B_{m}(a,b,c)$ for $m$ sufficiently large.

Assume that $ a\le b\le c $, $ \gcd(a,b,c)=1 $ and $ G_{m}(a,b,c)=B_{m}(a,b,c) $. Note that $ P_{ab}\le ab $ and $ P_{c}\le c  $. Also, when $ m\ge 6 $, $ P_{m-2}^{3/5}<(m-3)^{4/5}$. By Lemma \ref{lem:bound_c}, 
\begin{align*}
(m-3)c&\le C_{3}P_{m-2}^{3/5}P_{c}^{1/2}K(a,b,c)^{6/5}(ab)^{38/15} \\
&< C_{3}(m-3)^{4/5}c^{1/2}(24P_{ab}\rho(P_{abc}))^{6/5}(ab)^{38/15} \\
&< C_{3}(24\rho(P_{abc}))^{6/5}(m-3)^{4/5}c^{1/2}(ab)^{56/15}.
\end{align*}
Therefore, $ (m-3)^{2/5}c<C_{3}^{2}(24\rho(P_{abc}))^{12/5}(ab)^{112/15} $.
By Lemma \ref{lem:bound_ab}, we deduce that
\begin{align*}
(m-3)^{2/5}c&<C_{3}^{2}(24\rho(P_{abc}))^{12/5}(ab)^{112/15} \\
&< C_{3}^{2}(24\rho(P_{abc}))^{12/5}(C_{2}P_{abc}\rho(P_{abc})^{2}/\phi(P_{abc}))^{112/15}.
\end{align*} 
and so 
$(m-3)^{2/5}c<C_{4}\rho(P_{abc})^{18}(P_{abc}/\phi(P_{abc}))^{112/15}$, where $ C_{4}=24^{12/5}C_{2}^{112/15}C_{3}^{2}$. Since $ P_{abc}/\phi(P_{abc})\ll P_{abc}^{\varepsilon} $ and $ 2^{\omega(P_{abc})}\ll P_{abc}^{\varepsilon} $, 
\begin{align*}
(m-3)^{2/5}a\le (m-3)^{2/5}b\le (m-3)^{2/5}c\ll P_{abc}^{\varepsilon}.
\end{align*}
This implies that $ (m-3)^{6/5}abc\ll P_{abc}^{\varepsilon} $, where the implied constant only depends on $ \varepsilon $, but not on $ m $. Since $1\leq a\leq b\leq c$, this leads to a contradiction for $m$ sufficiently large.

\section*{Acknowledgments}
The authors would like to thank Yuk-Kam Lau for helpful discussion and the referee for his/her useful comments and suggestions.

\end{document}